\documentclass[11pt,oneside,english]{amsart}
\usepackage[T1]{fontenc}
\usepackage[latin9]{inputenc}
\usepackage{geometry}
\geometry{verbose,lmargin=2.4cm,rmargin=2.4cm}
\usepackage{color}
\usepackage{mathrsfs}
\usepackage{amsthm}
\usepackage{amssymb}
\usepackage{setspace}
\usepackage{esint}
\onehalfspacing

\makeatletter
\numberwithin{equation}{section}
\numberwithin{figure}{section}
 \theoremstyle{definition}
 \newtheorem*{defn*}{\protect\definitionname}
\theoremstyle{plain}
\newtheorem{thm}{\protect\theoremname}[section]
  \theoremstyle{plain}
  \newtheorem{fact}[thm]{\protect\factname}
  \theoremstyle{plain}
  \newtheorem{lem}[thm]{\protect\lemmaname}
  \theoremstyle{plain}
  \newtheorem{cor}[thm]{\protect\corollaryname}
  \theoremstyle{remark}
  \newtheorem*{rem*}{\protect\remarkname}
  \theoremstyle{remark}
  \newtheorem{rem}[thm]{\protect\remarkname}
  \theoremstyle{plain}
  \newtheorem{prop}[thm]{\protect\propositionname}

\makeatother

\usepackage{babel}
  \providecommand{\corollaryname}{Corollary}
  \providecommand{\definitionname}{Definition}
  \providecommand{\factname}{Fact}
  \providecommand{\lemmaname}{Lemma}
  \providecommand{\propositionname}{Proposition}
  \providecommand{\remarkname}{Remark}
\providecommand{\theoremname}{Theorem}

\begin{document}
\global\long\def\BB{\mathcal{B}}

\global\long\def\RR{\mathbb{R}}

\global\long\def\Tau{\mathcal{T}}

\global\long\def\Ra{\mathcal{R}}

\global\long\def\NN{\mathbb{N}}

\global\long\def\ZZ{\mathbb{Z}}

\global\long\def\P{\mathrm{P}}

\title{On the $K$ property for Maharam extensions of Bernoulli shifts and
a question of Krengel}

\author{Zemer Kosloff}

\curraddr{School of Mathematical Sciences, Tel Aviv University, 69978 Tel Aviv,
Israel. Email: zemerkos@post.tau.ac.il}
\begin{abstract}
We show that the Maharam extension of a type ${\rm III}$, conservative
and non singular $K$ Bernoulli is a $K$-transformation. This together
with the fact that the Maharam extension of a conservative transformation
is conservative gives a negative answer to Krengel's and Weiss's questions
about existence of a type ${\rm II}_{\infty}$ or type ${\rm III}_{\lambda}$
with $\lambda\neq1$ Bernoulli shift. A conservative non singular
$K$, in the sense of Silva and Thieullen, Bernoulli shift is either
of type ${\rm II}_{1}$ or of type ${\rm III}_{1}$. 
\end{abstract}
\begin{singlespace}

\thanks{This research was was supported by THE ISRAEL SCIENCE FOUNDATION
grant No. 1114/08.}
\end{singlespace}

\maketitle

\section{Introduction}

Let $T$ be an invertible non singular transformation of the probability
space $\left(X,\BB,\mu\right)$. The Maharam extension $\tilde{T}$
of $T$ is a measure preserving transformation which is a skew product
extension of $T$ with the Radon Nykodym cocycle. It is well known
that the Maharam extension is ergodic if and only if $T$ is of Krieger
type ${\rm III}_{1}$, see below. Here we show that in the case when
$T$ is a conservative non singular Bernoulli shift which satisfies
the $K$-property as in\cite{ST} with the one sided shift as the
exact factor , then the Maharam extension is a $K$-transformation.
Thus the Maharam extension is weak mixing in the sense that $T\times S$
is ergodic for every ergodic probability preserving transformation
$S$ and it has a countable Lebesgue spectrum. 

This type of non singular Bernoulli shifts was considered first in
\cite{Kre} where a shift without an absolutely continuous invariant
probability was constructed. Later Hamachi in \cite{Ham} constructed
an ergodic shift without an absolutely continuous $\sigma$-finite
invariant measure. Such transfomrations are called \textit{type} ${\rm III}$.
Krengel \cite{Kre} asked the question whether there exists a shift
with an absolutely continuous invariant $\sigma$-finite measure but
no such probability (these are called\textit{ type} ${\rm II}_{\infty}$).
The type ${\rm III}$ transformations can be further classified into
orbit equivalence classes according to their ratio set. In \cite{Kos}
a Bernoulli shift which is of Krieger type ${\rm III}_{1}$ was constructed.
In a presentation of that result Benjy Weiss asked whether there are
type ${\rm III}$ shifts of different Krieger types. As a corollary
of the $K$ property of the Maharam extension we get a dichotomy.
Namely an ergodic non singular $K$ Bernoulli shift is either of type
${\rm II}_{1}$ when the measure is equivalent to a stationary product
measure or of type ${\rm III}_{1}$. 

The proof makes use of the fact that since the Radon Nykodym cocycle
is measurable with respect to the $\sigma$-algebra $\BB_{\{0,1\}^{\NN}}$,
the Maharam extension is the natural extension of a skew product $\sigma_{\varphi}$
of the one sided shift. Thus it is enough to show that the tail equivalence
relation of the non invertible skew product is ergodic. This is done
by showing that the tail equivalence relation of $\sigma_{\varphi}$
is the orbit equivalence relation of the Maharam extension of the
odometer and proving that the odometer with the one sided measure
is of type ${\rm III}_{1}$. 

One step in the proof that the corresponding odometer action is type
${\rm III}_{1}$ is to show that for shift conservative product measures
we have two subsequences $n_{k}\to\infty$ and $m_{k}\to-\infty$
for which 
\[
\lim_{k\to\infty}P_{n_{k}}=\lim_{k\to\infty}P_{m_{k}}.
\]
The question arises whether for conservative shifts the limit needs
to exist? We give an example of a conservative shift with 
\[
\liminf_{k\to\infty}P_{k}(0)<\limsup_{k\to\infty}P_{k}(0),
\]
thus answering this question on the negative. 

\begin{singlespace}
\textit{\textcolor{black}{Acknowledgments.}}\textcolor{red}{{} }\textcolor{black}{I
would like to thank my advisor Prof. Jon Aaronson for many valuable
suggestions. I would also like to thank Prof. Ulrich Krengel for sending
me a copy of Michael Grewe's master thesis. }
\end{singlespace}

\section{Preliminaries}

\subsection{Non Singular Ergodic Theory}

Let $\left(X,\BB,\mu\right)$ be a standard measure space. Since one
can always pass to an equivalent probability measure, we will always
assume that $\mu$ is a probability measure. In what follows all equalities
of sets are modulo the measure on the space. 

A measurable transformation $T:X\to X$ is non singular if $\mu$
and $T_{*}\mu=\mu\circ T^{-1}$ are equivalent, meaning that they
have the same collection of null sets. In the case when $T$ is invertible
there exists the Radon Nykodym derivatives 
\[
T^{n'}(x):=\frac{d\mu\circ T^{n}}{d\mu}(x).
\]
When $T_{*}\mu=\mu$ we say that $T$ is $\mu$ preserving or $\mu$
is $T$ invariant. A transformation is \textit{ergodic} if $T^{-1}A=A$
implies $A\in\left\{ \emptyset,X\right\} $. A set $A\in\BB$ is \textit{wandering}
if $\left\{ T^{-n}A\right\} _{n=1}^{\infty}$ are disjoint. Denote
by $\mathfrak{D}$ the (measurable) union of all wandering sets, it's
complement is denoted by $\mathfrak{C}$ and is called the conservative
part. In the case where $\mathfrak{D}=X$ we say that $T$ is \textit{dissipative}.
If $\mathfrak{C}=X$ we say that $T$ is \textit{conservative}. By
Hopf's theorem \cite[Prop. 1.3.1.]{Aa} 
\begin{eqnarray}
\mathfrak{D} & = & \left\{ x\in X:\ \sum_{n=1}^{\infty}T^{n'}(x)<\infty\right\} \label{eq: Conservative Dissipative}\\
\mathfrak{C} & = & \left\{ x\in X:\ \sum_{n=1}^{\infty}T^{n'}(x)=\infty\right\} .\nonumber 
\end{eqnarray}

An invertible transformation $T$ satisfies the $K$\textit{-property}
if there exists a sub-$\sigma$ algebra $\mathcal{F}\subset\BB$ such
that $T^{-1}\mathcal{F\subset F}$, ${\displaystyle \bigcap_{n\in\ZZ}T^{n}\mathcal{F}=\left\{ \emptyset,X\right\} }$
and $\vee_{n=1}^{\infty}T^{-n}\mathcal{F}=\BB$. If $T$ is measure
preserving and $K$ then $T$ is either conservative or totally dissipative.
This property remains true in the case of non singular Bernoulli shifts,
see Lemma \ref{lem:[Grewe]} or \cite{Gre}. 

A measure preserving transformation $\left(Y,\BB_{Y},\nu,S\right)$
is an extension of $\left(X,\BB_{X},\mu,T\right)$ (equivalently $T$
is a factor of $X$) if there exists a measurable map $\pi:Y\to X$
such that $\pi^{-1}\BB_{X}\subset\BB_{Y}$, $\pi\circ S=T\circ\pi$
and $\pi_{*}\nu=\mu$. Given a non-invertible measure preserving transformation
$\left(X,\BB_{X},\mu,T\right)$, the \textit{natural extension} of
$T$ is an invertible measure preserving transformation $\check{T}$
which is minimal in the sense that 
\[
\vee_{n=1}^{\infty}\check{T}^{n}\pi^{-1}\BB_{X}=\BB_{\check{X}},
\]
where $\pi:\check{X}\to X$ is the factor map.

\subsection{Cocycles and skew product extensions}

A function $\varphi:\mathbb{N}\times X\to\RR$ ( or $\ZZ\times X\to\RR$
when $T$ is invertible) is a \textit{cocycle} if for every $n,m\in\NN$
and almost every $x\in X$,
\begin{equation}
\varphi_{n+m}(x)=\varphi_{n}(x)+\varphi_{m}\left(T^{n}x\right).\label{eq:cocycle equations}
\end{equation}
Given a function $\varphi:X\to\RR$ we can define the cocycle 
\[
\forall n\in\NN,\ \varphi_{n}(x)=\varphi(x)+\varphi\circ T(x)+\cdots+\varphi\circ T^{n-1}(x),
\]
and the \textit{skew product extension} $T_{\varphi}:\left(X\times\RR,\BB_{X}\otimes\BB_{\RR},\mu\times e^{s}ds\right)$
of $T$ with $\varphi$ by
\[
T_{\varphi}(x,y):=\left(Tx,y+\varphi(x)\right).
\]
 
\begin{defn*}
The set of \textit{essential values} for $\varphi$ is 
\[
e(T,\varphi)=\left\{ t\in\RR:\ \forall\epsilon>0,\forall A\in\left(\BB_{X}\right)_{+},\exists n\in\NN\ s.t.\ \mu\left(A\cap T^{-n}A\cap\left[\left|\varphi_{n}-a\right|<\epsilon\right]\right)>0\right\} 
\]

\end{defn*}
It follows from the cocycle equation \eqref{eq:cocycle equations}
that the set of essential values is a closed subset (under addition)
of $\RR$ and therefore it is of the form $\emptyset,\{0\},\{0\}\cup a\ZZ\ (a\in\RR)$
or $\RR$. The skew product $T_{\varphi}$ is ergodic if and only
if $T$ is ergodic and $e(S,\varphi)=\RR$. 

We will be interested in the \textit{Maharam extension} $\tilde{T}$
which is the skew product extension of an invertible transformation
$T:\left(X,\BB,\mu\right)\circlearrowleft$ with $\varphi(x)=\log T'(x)$,
the Radon-Nykodym cocycle. In the case when the Maharam extension
is ergodic we say that $T$ is of type ${\rm III}_{1}$ ($e\left(T,\log\frac{d\mu\circ S}{d\mu}\right)=\RR$
and $T$ is ergodic). In the case where $T$ is conservative and there
exists a $\mu-$ equivalent $\sigma-$finite invariant measure the
essential value set is $e\left(T,\log\frac{d\mu\circ S}{d\mu}\right)=\{0\}$.

\subsection{The tail and the orbital equivalence relation of a transformation}

For a more detailed discussion of the contents of this subsection
see \cite{KM}. 

Let $\left(X,\BB_{X}\right)$ be a standard measure space. An equivalence
relation on $X$ is a set $\Ra\subset X\times X$ such that the relation
$x\sim y$ if and only if $(x,y)\in\Ra$ is an equivalence relation.
It is measurable if $\Ra\subset\BB_{X}\otimes\BB_{X}$. Given an equivalence
relation $\Ra$ and a set $A\in\BB_{X}$, the \textit{saturation}
of $A$ is the set
\[
\Ra(A):=\bigcup_{x\in A}R_{x},
\]
where $\Ra_{x}:=\left\{ y\in X:\ (x,y)\in R\right\} .$ Given a measure
$\mu$ on $X$, we say that $\Ra$\textit{ is $\mu-$ergodic} if for
each $A\in\BB_{X}$, 
\[
\Ra(A)\in\left\{ \emptyset,X\right\} mod\ m.
\]

An equivalence relation is finite (respectively countable) if for
all $x\in X$, $\Ra_{x}$ is a finite (countable) set. It is hyperfinite
if there exists an increasing sequence of finite subequivalence relation
$E_{1}\subset E_{2}\subset\cdots\subset\Ra$ such that
\[
\Ra=\bigcup_{n=1}^{\infty}E_{n}.
\]

Given a non singular non-invertible transformation $\left(X,\BB_{X},\nu,S\right)$
we define the \textit{orbit equivalence relation} on $X\times X$

\[
\Ra_{S}:=\left\{ \left(y_{1},y_{2}\right)\in X\times X:\ \exists n,m\in\mathbb{N},\ S^{n}y_{1}=S^{m}y_{2}\right\} .
\]
and the \textit{tail relation}, which we denote by $\Tau(S)$, by
\[
\Tau(S)=\left\{ \left(y_{1},y_{2}\right):\ \exists n\in\NN,\ S^{n}y_{1}=S^{n}y_{2}\right\} .
\]
A transformation is \textit{exact} if for all $A\in\BB$, 
\[
\Tau(S)_{A}\in\{\emptyset,Y\}\ mod\nu
\]
By \cite{We,SlS} an equivalence relation is hyperfinite if and only
if it is an orbit relation of a non singular transformation. Therefore
if $\Tau_{S}$ is hyperfinite, which is true in our setting since
the shift is finite to one, there exists a non-singular transformation
$V$ of $\left(Y,\BB_{Y},\nu\right)$, which we call the \textit{tail
action of} $S$, such that 
\[
\Ra_{V}=\Tau_{S}.
\]
 It follows that $S$ is exact if and only if $V$ is ergodic. 

A function $\hat{\varphi}:\Ra\to\RR$ is an \textit{orbital cocycle}
if for every $x,y,z\in X$ in the same equivalence class of $\Ra$,
\[
\hat{\varphi}(x,y)=\hat{\varphi}(x,z)+\hat{\varphi}(z,y).
\]
To every function $\varphi:X\to\RR$ corresponds an orbital cocycle
$\hat{\varphi}$ on $\Tau_{S}$ (notice that the sum is actually a
finite sum) defined by 
\[
\hat{\varphi}\left(y_{1},y_{2}\right):=\sum_{n=0}^{\infty}\left\{ \varphi\left(S^{n}y_{1}\right)-\varphi\left(S^{n}y_{2}\right)\right\} ,\ \left(y_{1},y_{2}\right)\in\Tau(S).
\]
and the $\Ra_{V}$-cocycle $\psi$ defined by
\[
\psi(y)=\hat{\varphi}\left(y,Vy\right).
\]

The following fact shows that the skew product $S_{\varphi}$ is exact
if and only if $V_{\psi}$ is ergodic where $V$ is the tail action
of $S$ and $\psi$ is its corresponding cocycle. 
\begin{fact}
\cite{ANS}\label{pro:tail of skew product}Let $\left(Y,\BB_{Y},\nu,S\right)$
be a non singular and non-invertible transformation and $\left(Y,\BB_{Y},\nu,V\right)$
its associated tail action. Let $\varphi:Y\to\RR$ be a function and
$\psi$ the corresponding $\Ra_{V}$ cocycle. Then 
\[
\Tau_{S_{\varphi}}=\Ra_{V_{\psi}}.
\]

\end{fact}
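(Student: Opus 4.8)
The plan is to show that both $\Tau_{S_{\varphi}}$ and $\Ra_{V_{\psi}}$ coincide with one and the same explicitly described subset of $\left(Y\times\RR\right)^{2}$, namely
\[
\Delta:=\left\{ \left(\left(y_{1},t_{1}\right),\left(y_{2},t_{2}\right)\right):\ \left(y_{1},y_{2}\right)\in\Tau(S),\ t_{2}-t_{1}=\hat{\varphi}\left(y_{1},y_{2}\right)\right\}.
\]
First I would unwind the tail relation of the skew product. Writing $\varphi_{n}(y)=\sum_{k=0}^{n-1}\varphi\left(S^{k}y\right)$, one has $S_{\varphi}^{n}(y,t)=\left(S^{n}y,t+\varphi_{n}(y)\right)$, so $S_{\varphi}^{n}\left(y_{1},t_{1}\right)=S_{\varphi}^{n}\left(y_{2},t_{2}\right)$ holds exactly when $S^{n}y_{1}=S^{n}y_{2}$ and $t_{1}-t_{2}=\varphi_{n}(y_{2})-\varphi_{n}(y_{1})$. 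The point is that once $S^{N}y_{1}=S^{N}y_{2}$ all later summands in the definition of $\hat{\varphi}$ vanish, so the (finite) orbital cocycle satisfies $\hat{\varphi}\left(y_{1},y_{2}\right)=\varphi_{N}(y_{1})-\varphi_{N}(y_{2})$ for every such $N$; in particular its value is independent of the witness $n$. Substituting gives $\Tau_{S_{\varphi}}=\Delta$.

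Next I would unwind the orbit relation of $V_{\psi}$. Since $V$ is the invertible generator of the hyperfinite relation $\Ra_{V}=\Tau(S)$, the skew product $V_{\psi}$ is invertible with $V_{\psi}^{k}(y,t)=\left(V^{k}y,t+\psi_{k}(y)\right)$, where $\psi_{k}$ denotes the cocycle of $\psi$ along the $V$-orbit, extended to $k\in\ZZ$. The key identity is the telescoping
\[
\psi_{k}(y)=\sum_{j=0}^{k-1}\hat{\varphi}\left(V^{j}y,V^{j+1}y\right)=\hat{\varphi}\left(y,V^{k}y\right),
\]
which follows directly from $\psi(y)=\hat{\varphi}(y,Vy)$ and the orbital cocycle relation $\hat{\varphi}(x,z)=\hat{\varphi}(x,w)+\hat{\varphi}(w,z)$ (together with $\hat{\varphi}(x,y)=-\hat{\varphi}(y,x)$, which handles negative $k$). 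Hence $\left(\left(y_{1},t_{1}\right),\left(y_{2},t_{2}\right)\right)\in\Ra_{V_{\psi}}$ iff there is $k\in\ZZ$ with $y_{2}=V^{k}y_{1}$ and $t_{2}=t_{1}+\hat{\varphi}\left(y_{1},y_{2}\right)$. Because $y_{2}=V^{k}y_{1}$ for some $k$ precisely when $\left(y_{1},y_{2}\right)\in\Ra_{V}=\Tau(S)$, this says exactly $\Ra_{V_{\psi}}=\Delta$, and combining with the previous step yields the claim.

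The computations on either side are routine; the real content, and the step I expect to need the most care, is the bridge between the three incarnations of the cocycle: the fibre increment $\varphi_{n}(y_{1})-\varphi_{n}(y_{2})$ of $S_{\varphi}$, the orbital cocycle $\hat{\varphi}\left(y_{1},y_{2}\right)$, and the fibre increment $\psi_{k}(y_{1})$ of $V_{\psi}$. What makes this work is that $\hat{\varphi}\left(y_{1},y_{2}\right)$ is a genuine function of the pair alone: its defining sum is finite on $\Tau(S)$ and its value does not depend on the witness $n$ (respectively $k$) realizing the tail (respectively orbit) equivalence. I would verify this independence explicitly, together with the measurability of $\Delta$ and of $\psi$, since these are exactly the points at which the two a priori different descriptions --- \emph{eventually agree under $S$} versus \emph{lie on a common $V$-orbit} --- are reconciled through $\Ra_{V}=\Tau(S)$.
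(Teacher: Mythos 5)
Your argument is correct, but there is nothing in the paper to compare it against: the statement is labelled a Fact and quoted from \cite{ANS} without proof, so your write-up in effect supplies the proof the paper omits. The structure is sound: you identify both $\Tau_{S_{\varphi}}$ and $\Ra_{V_{\psi}}$ with the set $\Delta$ cut out by the orbital cocycle, and the two hinges of the argument are exactly the right ones, namely that $\hat{\varphi}\left(y_{1},y_{2}\right)=\varphi_{N}(y_{1})-\varphi_{N}(y_{2})$ for \emph{every} witness $N$ with $S^{N}y_{1}=S^{N}y_{2}$ (so the fibre condition in $\Tau_{S_{\varphi}}$ is witness-independent), and the telescoping identity $\psi_{k}(y)=\hat{\varphi}\left(y,V^{k}y\right)$. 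One small caveat: you assume $V$ is an invertible generator of the hyperfinite relation. The paper only provides, via \cite{We,SlS}, a non-singular transformation $V$ with $\Ra_{V}=\Tau(S)$, and its definition of an orbit relation explicitly covers non-invertible maps, $\Ra_{V}=\left\{ \left(y_{1},y_{2}\right):\ \exists n,m\in\NN,\ V^{n}y_{1}=V^{m}y_{2}\right\}$. Your proof survives this weakening with no real change: if $V^{n}y_{1}=V^{m}y_{2}=:z$ and $t_{1}+\psi_{n}(y_{1})=t_{2}+\psi_{m}(y_{2})$, then by telescoping and the orbital cocycle relation
\[
t_{2}-t_{1}=\psi_{n}(y_{1})-\psi_{m}(y_{2})=\hat{\varphi}\left(y_{1},z\right)+\hat{\varphi}\left(z,y_{2}\right)=\hat{\varphi}\left(y_{1},y_{2}\right),
\]
so $\Ra_{V_{\psi}}=\Delta$ still holds (in the application of the paper $V$ is the odometer, which is invertible, so this is only a matter of matching the stated hypotheses). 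Finally, since $\Ra_{V}=\Tau(S)$ is only an equality modulo $\nu$-null sets, the conclusion $\Tau_{S_{\varphi}}=\Ra_{V_{\psi}}$ should likewise be read mod null sets, consistent with the paper's standing convention that all set equalities are modulo the measure.
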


\subsection{The Zero Type property and dissipative transformations:}

Given two measures on $\left(X,\BB\right)$ we can define the Hellinger
Integral \cite{Kak,Kos} by 
\[
\rho\left(\mu,\nu\right)=\int_{X}\sqrt{\frac{d\mu}{d\lambda}}\sqrt{\frac{d\nu}{d\lambda}}d\lambda
\]
where $\lambda$ is any measure on $X$ such that $\nu\ll\lambda$
and $\mu\ll\lambda$. 

If $T$ is a non singular transformation of $\left(X,\BB,\mu\right)$
then since $T_{*}^{n}\mu\sim\mu$ we have 
\[
\rho(n):=\rho\left(\mu,T_{*}^{n}\mu\right)=\int_{X}\sqrt{T^{n'}(x)}d\mu(x).
\]
A transformation is Zero-Type (sometimes also called mixing) if the
maximal spectral type of its Koopman operator defined by 
\[
\forall f\in L^{2}(X,\mu),\ U_{T}f:=\sqrt{T'}\cdot f\circ T
\]
is a Rajchman measure. This is equivalent to the condition: For every
$f,g\in L^{2}(X,\mu)$,
\[
\int_{X}U_{T}^{n}f\cdot\bar{g}d\mu\xrightarrow[n\to\infty]{}0.
\]
Note that when $T$ is probability preserving one needs to restrict
the class of functions to $L^{2}(X,\mu)\ominus\mathbb{C}$. 

The next lemma will be used to get a necessary criterion for conservativity
of Bernoulli shifts. .
\begin{lem}
\label{lem:Dissipative zero type}If $\left(X,\BB,\mu,T\right)$ is
zero type and ${\displaystyle \sum_{n=1}^{\infty}\rho\left(\mu,T_{*}^{n}\mu\right)<\infty}$
then $T$ is dissipative. \end{lem}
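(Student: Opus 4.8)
The plan is to reduce the statement, via the Hopf dichotomy \eqref{eq: Conservative Dissipative}, to showing that $\sum_{n=1}^{\infty}T^{n'}(x)<\infty$ for $\mu$-a.e.\ $x$; once this holds the conservative part $\mathfrak{C}$ is null and $T$ is dissipative. The first point to record is \emph{why} one cannot simply integrate: since $\int_{X}T^{n'}\,d\mu=(\mu\circ T^{n})(X)=1$ for every $n$, one has $\int_{X}\sum_{n}T^{n'}\,d\mu=\infty$, so the sum $\sum_{n}T^{n'}$ is never integrable and an $L^{1}$ bound on it is hopeless. The hypothesis, by contrast, is a statement about the \emph{square roots}: recalling $\rho(n)=\int_{X}\sqrt{T^{n'}}\,d\mu$, the standing assumption is precisely $\sum_{n}\int_{X}\sqrt{T^{n'}}\,d\mu<\infty$.

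The key step is to pass from the square roots back to the functions themselves. By Tonelli's theorem applied to the non-negative functions $\sqrt{T^{n'}}$,
\[
\int_{X}\sum_{n=1}^{\infty}\sqrt{T^{n'}(x)}\,d\mu(x)=\sum_{n=1}^{\infty}\int_{X}\sqrt{T^{n'}(x)}\,d\mu(x)=\sum_{n=1}^{\infty}\rho(n)<\infty,
\]
so the function $F(x):=\sum_{n=1}^{\infty}\sqrt{T^{n'}(x)}$ lies in $L^{1}(\mu)$ and in particular $F(x)<\infty$ for $\mu$-a.e.\ $x$. Fix such an $x$. Then $\sqrt{T^{n'}(x)}\to 0$, so there is $N(x)$ with $\sqrt{T^{n'}(x)}\le 1$ for all $n\ge N(x)$, whence $T^{n'}(x)=(\sqrt{T^{n'}(x)})^{2}\le\sqrt{T^{n'}(x)}$ for those $n$. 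Summing gives
\[
\sum_{n=N(x)}^{\infty}T^{n'}(x)\le\sum_{n=N(x)}^{\infty}\sqrt{T^{n'}(x)}\le F(x)<\infty,
\]
and adjoining the finitely many remaining terms yields $\sum_{n=1}^{\infty}T^{n'}(x)<\infty$. Thus $\mu$-a.e.\ point lies in $\mathfrak{D}$, i.e.\ $T$ is dissipative.

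I do not anticipate a genuine obstacle: the whole content is the elementary fact that $\sum_{n}\sqrt{a_{n}}<\infty$ forces $\sum_{n}a_{n}<\infty$ for non-negative $a_{n}$, and the only subtlety is the realization that one must work with $\sqrt{T^{n'}}$ rather than $T^{n'}$ directly. I would note that this route never uses the zero-type hypothesis: summability of the Hellinger integrals $\sum_{n}\rho(n)<\infty$ already forces dissipativity on its own. Zero type only guarantees $\rho(n)\to 0$, which is automatic under the summability assumption, so I would simply present the direct argument above rather than routing through the spectral formulation.
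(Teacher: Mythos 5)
Your proof is correct and is essentially the paper's own argument: both reduce to showing $\sum_{n}T^{n'}(x)<\infty$ a.e.\ via Hopf's criterion, pass to $\sum_{n}\sqrt{T^{n'}}$ (a.e.\ finite by Tonelli and the summability of the Hellinger integrals), and then compare $T^{n'}\le\sqrt{T^{n'}}$ once the square roots drop below $1$. The only cosmetic difference is that the paper obtains the eventual bound $T^{n'}(x)\le 1$ via Borel--Cantelli applied to $\mu\left(T^{n'}>1\right)\le\rho(n)$, whereas you read it off directly from the a.e.\ convergence of the series; your remark that the zero-type hypothesis is never actually used is likewise consistent with the paper's proof, which does not invoke it either.
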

\begin{proof}
Since 
\[
\mu\left(\left|T^{n'}\right|>1\right)\leq\int_{X}\sqrt{T^{n'}}d\mu=\rho\left(\mu,T_{*}^{n}\mu\right)
\]
and the right hand side is summable, it follows from the Borel Cantelli
lemma that for almost every $x\in X$ there exists $N(x)\in\NN$ such
that for every $n>N(x)$, 
\[
T^{n'}(x)\leq\sqrt{T^{n'}(x)}\leq1.
\]
In addition the summability condition on $\rho\left(\mu,T_{*}^{n}\mu\right)$
ensures that 
\[
\sum_{n=1}^{\infty}\sqrt{T^{n'}}<\infty\ a.e\ d\mu.
\]
 Therefore by comparison of sums we have that 
\[
\sum_{n=1}^{\infty}T^{n'}(x)<\infty\ a.e.\ d\mu
\]
and so $T$ is dissipative. 
\end{proof}

\section{Half stationary Bernoulli Shifts}

\subsection{Non Singular Bernoulli Shift}

Let $X=\{0,1\}^{\mathbb{Z}},\BB=\BB_{X}$ , $X^{+}=\{0,1\}^{\mathbb{N}}$
and $\BB^{+}=\BB_{X^{+}}$. We will write $\sigma$ for the one-sided
shift on $X^{+}$ and $T$ for the full shift on $X$.

A product measure $P={\displaystyle \prod_{k=-\infty}^{\infty}P_{k}}\in\mathcal{P}\left(X\right)$
is \textbf{\textit{half stationary}}\texttt{ }if there exists $p\in(0,1)$
such that for all $k\leq0$,
\[
P_{k}\left(0\right)=1-P_{k}(1)=p.
\]

We will consider the case $p=\frac{1}{2}$. The case of general $p$
being similar. 

Thus the general form of a half stationary product measure (with $p=\frac{1}{2}$)
is 
\begin{equation}
P_{k}(0)=1-P_{k}(1)=\begin{cases}
\frac{1-a_{i}}{2} & k\in\NN\\
\frac{1}{2} & k\leq0
\end{cases},\label{Half stationary product measure}
\end{equation}
where $a_{i}\in\left(-1,1\right)$.

Let $P^{+}={\displaystyle \prod_{k=1}^{\infty}P_{k}}$ denote the
measure of $P$ restricted to $X^{+}$. If $P$ is half stationary,
then the full shift $T$ is the natural extension ,in the sense of
Silva and Thieullen \cite{ST}, of the one sided shift $\left(X^{+},\BB,P^{+}={\displaystyle \prod_{k=1}^{\infty}P_{k},\sigma}\right)$.
Since by Kolmogorov's $0-1$ Law the one sided shift is exact, the
full shift is a $K$-transformation. Conversely every $K$-Bernoulli
shift such that $T'$ is $\BB^{+}$ measurable is a shift with a half
stationary measure. We call such transformations non-singular $K$-shifts. 

The following gives conditions on the product measures so that the
shift is non singular and ergodic. 

\begin{thm} \label{thm: Kakutani}Let $P$ be of the form \eqref{Half stationary product measure}.
Then\begin{enumerate}

\item The shift $\left(X,\BB,P,T\right)$ is non singular if and
only if for all $n\in\NN$, $\left|a_{n}\right|\neq1$ and 
\begin{equation}
\sum_{k=0}^{\infty}\left\{ \left(\sqrt{P_{k}(0)}-\sqrt{P_{k+1}(0)}\right)^{2}+\left(\sqrt{P_{k}(1)}-\sqrt{P_{k+1}(1)}\right)^{2}\right\} <\infty.\label{eq:non singular condition}
\end{equation}

\item \label{ext:(II.2)-Radon Nykodym derivatives}For every $n\in\mathbb{N}$,
\[
T^{n'}(x)=\frac{dP\circ T^{n}}{dP}(x)=\prod_{k=1}^{\infty}\frac{P_{k-n}\left(w_{k}\right)}{P_{k}\left(w_{k}\right)}.
\]

\item\label{ext:(3)-Conservative implies ergodic}If the shift is
conservative then it is ergodic. 

\item There is an absolutely continuous invariant probability if
and only 
\[
\sum_{k=1}^{\infty}a_{k}^{2}<\infty.
\]

\item There exists constants $c,C>0$ such that 
\begin{equation}
c\cdot d\left(P,T_{*}^{n}P\right)\leq-\log\left(\rho\left(P,T_{*}^{n}P\right)\right)\leq C\cdot d\left(P,T_{*}^{n}P\right)\label{eq: Kakutanis observation}
\end{equation}
where 
\[
d\left(\prod P_{i},\prod Q_{i}\right)=\sum_{i\in\ZZ}\left\{ \left(\sqrt{P_{i}(0)}-\sqrt{Q_{i}(0)}\right)^{2}+\left(\sqrt{P_{i}(1)}-\sqrt{Q_{i}(1)}\right)^{2}\right\} .
\]

\end{enumerate}\end{thm}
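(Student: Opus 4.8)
The common engine for parts (1), (2) and (5) is Kakutani's dichotomy for infinite product measures together with the multiplicativity of the Hellinger integral, $\rho\left(\prod_k P_k,\prod_k Q_k\right)=\prod_k\rho(P_k,Q_k)$. I would begin with (2), since it underlies everything else. Because $P$ and $P\circ T^n$ are both product measures their densities factorize over coordinates; computing the one dimensional marginal of $P\circ T^n$ at site $k$ as $P_{k-n}$ and taking ratios gives
\[
\frac{dP\circ T^{n}}{dP}(w)=\prod_{k=1}^{\infty}\frac{P_{k-n}(w_{k})}{P_{k}(w_{k})},
\]
where only the sites $k\ge 1$ survive because for $k\le 0$ both marginals equal $(\tfrac12,\tfrac12)$. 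I would make this rigorous by computing the density on finite cylinders and passing to the limit, the convergence of the infinite product being guaranteed by the non-singularity condition.

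For (1), non-singularity means $P\circ T\sim P$, i.e. the product measures $\prod_k P_k$ and $\prod_k P_{k+1}$ are equivalent. Kakutani's theorem yields the equivalent/singular dichotomy, with equivalence iff $\sum_k\left(1-\rho(P_k,P_{k+1})\right)<\infty$, and since $1-\rho(P_k,P_{k+1})=\tfrac12\left[(\sqrt{P_k(0)}-\sqrt{P_{k+1}(0)})^2+(\sqrt{P_k(1)}-\sqrt{P_{k+1}(1)})^2\right]$ this is exactly \eqref{eq:non singular condition}. The side condition $|a_n|\neq 1$ is what keeps each $P_n$ non-degenerate and hence coordinatewise equivalent to its neighbours, without which Kakutani's dichotomy falls automatically on the singular side. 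For (5) I would take logarithms in $\rho(P,T_*^nP)=\prod_k\rho(P_k,P_{k-n})$ and compare each factor with the coordinate distance $2\left(1-\rho(P_k,P_{k-n})\right)$ appearing in $d$. The lower bound $c\,d(P,T_*^nP)\le-\log\rho(P,T_*^nP)$ is free, from $-\log(1-y)\ge y$; the upper bound needs $-\log(1-y)\le Cy$, valid only while $y$ stays bounded away from $1$, i.e. while the affinities $\rho(P_k,P_{k-n})$ remain bounded below uniformly in $n$ --- the one place where the half-stationary structure must be used.

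For (4) the easy direction is immediate: $d(P_k,(\tfrac12,\tfrac12))\asymp a_k^2$ uniformly in $a_k\in(-1,1)$, so by Kakutani $\sum_k a_k^2<\infty$ is equivalent to $P\sim P_{1/2}$, the symmetric product measure, which is $T$-invariant. For the converse, suppose $Q\sim P$ is an invariant probability; then $T$ is conservative, hence ergodic by (3). Writing $g:=dP/dQ$ and using invariance of $Q$ one computes
\[
\rho(P,T_*^{n}P)=\int\sqrt{g}\,\bigl(\sqrt{g}\circ T^{-n}\bigr)\,dQ,
\]
so, since $T$ preserves $Q$, the von Neumann mean ergodic theorem together with ergodicity forces $\tfrac1N\sum_{n<N}\rho(P,T_*^nP)\to\left(\int\sqrt{g}\,dQ\right)^2>0$, and in particular $\limsup_n\rho(P,T_*^nP)>0$. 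Picking $n_j\to\infty$ with $\rho(P,T_*^{n_j}P)\ge\delta>0$ and using only the free lower bound from (5) gives $d(P,T_*^{n_j}P)\le 2\log(1/\delta)$; since the site-by-site expression for $d$ contains the block $\sum_{k=1}^{n_j}d(P_k,(\tfrac12,\tfrac12))\asymp\sum_{k=1}^{n_j}a_k^2$, letting $j\to\infty$ yields $\sum_k a_k^2<\infty$. Note there is no circularity: the forward direction of (4) uses (3) through conservativity of a system with a finite invariant measure.

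This leaves (3), and I expect the implication conservative $\Rightarrow$ ergodic to be the genuine obstacle, because for a non-stationary product measure the invariant $\sigma$-algebra need not be trivial, so the classical Kolmogorov argument for i.i.d.\ sequences does not apply and conservativity must be used in an essential way. My plan is to use that $T$ is the natural extension of the one-sided shift $\sigma$, which is exact by Kolmogorov's $0$--$1$ law, and to route through the tail machinery preceding Fact \ref{pro:tail of skew product}: ergodicity of $T$ is equivalent to ergodicity of the tail relation $\Tau(\sigma)$, and a Hopf-type ratio-ergodic argument shows that $\mathfrak{C}=X$ upgrades the triviality of the tail $\sigma$-field to full $\Tau(\sigma)$-ergodicity. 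Making this upgrade precise --- ruling out a non-trivial invariant set under the sole hypothesis of conservativity --- is the delicate step.
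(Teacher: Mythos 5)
Your reconstructions of (1) and (2) are exactly the standard Kakutani computations that the paper's one-line proof outsources to \cite{Kak}, and your proof of (4) is correct and is genuinely different from the paper, which simply cites \cite{Kre}: the identity $\rho\left(P,T_{*}^{n}P\right)=\int\sqrt{g}\,\bigl(\sqrt{g}\circ T^{-n}\bigr)\,dQ$ plus the mean ergodic theorem for the $Q$-unitary Koopman operator gives $\frac{1}{N}\sum_{n=0}^{N-1}\rho\left(P,T_{*}^{n}P\right)\to\|E_{Q}(\sqrt{g}\mid\mathcal{I})\|_{L^{2}(Q)}^{2}\geq\bigl(\int\sqrt{g}\,dQ\bigr)^{2}>0$ by Cauchy--Schwarz, where $\mathcal{I}$ is the invariant $\sigma$-algebra; note that you do not even need ergodicity here, so the appeal to (3) can be dropped and there is no circularity whatsoever. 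Combining $\limsup_{n}\rho>0$ with the free bound $-\log\rho\geq\frac{1}{2}d$ and the block estimate $d\left(P,T_{*}^{n_{j}}P\right)\geq\sum_{k=1}^{n_{j}}d\left(P_{k},(\tfrac12,\tfrac12)\right)\asymp\sum_{k=1}^{n_{j}}a_{k}^{2}$ then forces $\sum a_{k}^{2}<\infty$; this is a clean self-contained argument. One caveat on (5): your hedge is in the right place but your claimed resolution is not justified. Half-stationarity controls only the coordinates $1\leq k\leq n$, where one factor is the fair coin and hence $\rho\left(P_{k},P_{k-n}\right)\geq1/\sqrt{2}$; for $k>n$ both factors are $P_{k},P_{k-n}$ with positive indices, and if the sequence $(a_{k})$ accumulates at both $+1$ and $-1$ (which is compatible with non-singularity \eqref{eq:non singular condition}, via long blocks joined by slow transitions), the ratio $-\log\rho/d$ is unbounded in $n$. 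This is really an imprecision in the statement of (5) itself --- Kakutani's observation needs the factor measures uniformly non-degenerate --- and it is harmless for the paper, which only ever uses the unconditional direction $c\cdot d\leq-\log\rho$ with $c=\frac12$; but a careful write-up should add that hypothesis rather than attribute the bound to half-stationarity.

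The genuine gap is (3), and the problem is not merely that you leave it unproved: the plan you sketch rests on a false equivalence and, if carried out, would prove something false. You assert that ergodicity of $T$ is equivalent to ergodicity of $\Tau(\sigma)$, and that conservativity is what upgrades Kolmogorov tail-triviality to $\Tau(\sigma)$-ergodicity. Both halves are wrong, in opposite directions. A set saturated under $\Tau(\sigma)$ is invariant under all finite coordinate changes, hence satisfies $1_{A}=1_{A}\circ\pi_{n}$ pointwise, where $\pi_{n}$ replaces the first $n$ coordinates by $0$; thus $1_{A}$ is measurable with respect to $\sigma\left(x_{k}:k>n\right)$ for every $n$, i.e.\ $A$ is a tail set, and Kolmogorov's $0$--$1$ law makes $\Tau(\sigma)$ ergodic for \emph{every} non-degenerate product measure, conservative or not --- there is no upgrade for conservativity to perform, and the step you flag as ``the delicate step'' is actually free. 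Conversely, ergodicity of $\Tau(\sigma)$ does not imply ergodicity of $T$: the dissipative shift of Section 4.1 (or Grewe's example) has non-degenerate factors, so its tail relation is ergodic by the argument just given, yet a totally dissipative invertible transformation of a non-atomic space is never ergodic. So your plan, executed as stated, would show that every non-singular half-stationary shift is ergodic, contradicting those examples. Conservativity must enter exactly where you have placed nothing: in passing from the one-sided tail statement to two-sided $T$-invariant sets, which are not tail measurable. That passage is the content of Krengel's cited argument (Hopf's ratio ergodic theorem for conservative non-singular maps together with approximation by cylinders); alternatively, within this paper one can recover (3) a posteriori without circularity, since its proof of Theorem \ref{thm: K and Maharam} never uses (3): in the no-a.c.i.p.\ case, Lemma \ref{lem:conservative shift and odometer} and Theorem \ref{thm: K and Maharam} together with Maharam's theorem and \cite{Par} make the Maharam extension ergodic, hence $T$ ergodic; in the a.c.i.p.\ case, (4) gives $P\sim\prod\left(\tfrac12,\tfrac12\right)$ and equivalence to an ergodic measure-preserving shift yields ergodicity.
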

\begin{proof}
(1) and (2) follow from Kakutani's Theorem, \cite{Kak} on equivalence
of product measures. Parts (3) and (4) are in \cite{Kre}. (5) is
an observation of Kakutani. 
\end{proof}

\subsubsection{The Odometer as the tail action of the shift}

We will also consider the odometer action $\tau$ on $X^{+}$ given
by 
\[
\tau\left(\underset{n-times}{\underbrace{1,1,..,1}},0,w\right)=\left(\underset{n-times}{\underbrace{0,0,..,0}},1,w\right).
\]
The odometer and the one sided shift satisfy 
\[
\Ra_{\tau}=\Tau_{\sigma}.
\]
A calculation shows that 
\[
\tau'(x)=\frac{P_{\phi(x)}(1)}{P_{\phi(x)}(0)}\cdot\prod_{k=1}^{\phi(x)-1}\frac{P_{k}(0)}{P_{k}(1)}
\]
where 
\[
\phi(x):=\min\left\{ n\geq1:\ x_{n}=0\right\} .
\]

The odometer satisfies the so called \textit{Odometer Property, }which
states that for every $N\in\NN$ and $x\in X^{+}$, 
\[
\left\{ \left(\left(\tau^{k}x\right)_{1},\left(\tau^{k}x\right)_{1},\ldots,\left(\tau^{k}x\right)_{N}\right):\ k=0,1,...,2^{N}-1\right\} =\{0,1\}^{N}.
\]
Using this fact one shows that for every $n\in\NN$, 
\begin{equation}
\tau^{\left(2^{n}\right)'}(x)=\tau'\circ\sigma^{n}(x).\label{eq: RN for odometer along rigidity times}
\end{equation}
This can also be deduced from the fact that for all $n\in\NN$ and
$j\in\{1,..,n\}$, 
\[
\left(\tau^{2^{n}}(x)\right)_{j}=x_{j}.
\]
This property plays a crucial role in calculating the essential values
of the odometer action. See the proof of Lemma \ref{lem:conservative shift and odometer}
below.

\subsection{Statement of the main theorem and the Answer to Krengel's question}
\begin{thm}
\label{thm: K and Maharam}\textup{\textcolor{black}{For every $\left(X,\BB,P,T\right)$
a conservative and non singular $K$-shift}}\textup{\textcolor{red}{{}
}}\textup{without an absolutely continuous invariant probability measure
the Maharam extension is a $K$-transformation. }
\end{thm}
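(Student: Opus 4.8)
The plan is to reduce the $K$-property of the Maharam extension $\tilde T$ to an ergodicity statement about a tail equivalence relation, using the structural machinery set up in the Preliminaries. The key observation, already flagged in the introduction, is that the Radon--Nykodym cocycle $\log T'$ is $\BB^{+}$-measurable, so the Maharam extension $\tilde T$ is the natural extension of the skew product $\sigma_{\varphi}$ of the one-sided shift $\sigma$, where $\varphi(x)=\log\frac{dP\circ T}{dP}(x)$ restricted to $X^{+}$. Since $\tilde T$ is the natural extension of a non-invertible endomorphism, the $K$-property of $\tilde T$ is equivalent to \emph{exactness} of $\sigma_{\varphi}$: the tail filtration $\vee_{n}\sigma^{-n}(\BB^{+}\otimes\BB_{\RR})$ plays the role of the $K$-filtration $\mathcal F$, and exactness of $\sigma_{\varphi}$ says precisely that the intersection of the forward images of this filtration under $\tilde T$ is trivial. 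So the first step is to record carefully that $\tilde T=\check{(\sigma_{\varphi})}$ and that ``$\tilde T$ is $K$'' $\Longleftrightarrow$ ``$\sigma_{\varphi}$ is exact'' $\Longleftrightarrow$ ``$\Tau_{\sigma_{\varphi}}$ is ergodic.''

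Next I would invoke Fact \ref{pro:tail of skew product}: if $V$ denotes the tail action of $\sigma$ and $\psi$ the $\Ra_{V}$-cocycle corresponding to $\varphi$, then $\Tau_{\sigma_{\varphi}}=\Ra_{V_{\psi}}$. By the subsubsection on the odometer as the tail action of the shift, we have $\Ra_{\tau}=\Tau_{\sigma}$, so the tail action $V$ is exactly the odometer $\tau$ on $X^{+}$ equipped with $P^{+}$. Therefore ergodicity of $\Tau_{\sigma_{\varphi}}=\Ra_{\tau_{\psi}}$ is the statement that the skew product $\tau_{\psi}$ of the odometer is ergodic, i.e.\ that the odometer action $\tau$ with measure $P^{+}$ is of type ${\rm III}_{1}$, equivalently that $e(\tau,\psi)=\RR$. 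Thus the whole theorem is reduced to the single analytic statement: \emph{the one-sided odometer $\tau$ on $(X^{+},P^{+})$ is of type ${\rm III}_{1}$}, which I expect to be proved in the separate lemma referenced as Lemma \ref{lem:conservative shift and odometer}.

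The remaining work, and the main obstacle, is computing the essential value set $e(\tau,\psi)=\RR$. Here one exploits the explicit formula for $\tau'$ and the Odometer Property, in particular the identity \eqref{eq: RN for odometer along rigidity times}, $\tau^{(2^{n})'}=\tau'\circ\sigma^{n}$. The idea is that along the rigidity times $2^{n}$ the odometer returns close to the identity on the first $n$ coordinates, so the cocycle increments $\psi_{2^{n}}$ can be read off from the single-step derivative evaluated at the shifted point; one then needs to produce, for any target $t\in\RR$ and any $\epsilon>0$, a positive-measure set of points and a time at which the cocycle lands within $\epsilon$ of $t$. The hypotheses that $T$ is conservative and has \emph{no} absolutely continuous invariant probability are exactly what force the cocycle to be genuinely nontrivial: conservativity (via \eqref{eq: Conservative Dissipative} and Theorem \ref{thm: Kakutani}) rules out the values spreading off to infinity dissipatively, while the absence of an a.c.\ invariant probability, i.e.\ $\sum a_{k}^{2}=\infty$ by part (4) of Theorem \ref{thm: Kakutani}, rules out the type ${\rm II}_{1}$ case $e(\tau,\psi)=\{0\}$. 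Showing that the essential value set is all of $\RR$ rather than a discrete subgroup $a\ZZ$ is the delicate point, and is presumably where the subsequence argument mentioned in the introduction (producing $n_{k}\to\infty$, $m_{k}\to-\infty$ with $\lim P_{n_{k}}=\lim P_{m_{k}}$) enters, to manufacture cocycle values filling out a dense additive subgroup and hence, by closedness of $e(\tau,\psi)$, all of $\RR$.
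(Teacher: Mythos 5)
Your reduction is the paper's own route, step for step: since $\log T'$ is $\BB^{+}$-measurable, the Maharam extension is the natural extension of $\sigma_{\varphi}$; the $K$-property of $\tilde{T}$ is equivalent to exactness of $\sigma_{\varphi}$; Fact \ref{pro:tail of skew product} turns exactness into ergodicity of $\Ra_{\tau_{\psi}}$, the odometer $\tau$ being the tail action of $\sigma$; and the analytic input is the type ${\rm III}_{1}$ property of $\left(X^{+},\BB^{+},P^{+},\tau\right)$, which is Lemma \ref{lem:conservative shift and odometer}. Your sketch of that lemma (rigidity times $2^{n}$, the identity $\tau^{(2^{n})'}=\tau'\circ\sigma^{n}$, conservativity forcing $0$ to be a limit point of $(a_{n})$, absence of an a.c.i.p.\ giving $\sum a_{k}^{2}=\infty$) also matches the paper's argument in outline.

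There is, however, one genuine gap. You pass from ``$\Ra_{\tau_{\psi}}$ is ergodic'' to ``$\tau$ is of type ${\rm III}_{1}$, equivalently $e(\tau,\psi)=\RR$'', treating $\psi$ as if it were the Radon--Nykodym cocycle of the odometer. It is not so by definition: $\psi(x)=\hat{\varphi}\left(x,\tau x\right)=\varphi_{\phi(x)}(x)-\varphi_{\phi(x)}\left(\tau x\right)$ is assembled from the derivative of the \emph{two-sided shift} $T$, whereas type ${\rm III}_{1}$ of $\tau$ concerns $\log\tau'=\log\frac{dP^{+}\circ\tau}{dP^{+}}$. The equality $\psi=\log\tau'$ is precisely the paper's Lemma \ref{lem: psi is logtau'}, and its proof is a computation that uses the half-stationary structure in an essential way: expanding $\psi$ via part (2) of Theorem \ref{thm: Kakutani} produces, besides the factors $\frac{P_{k}\left(1-x_{k}\right)}{P_{k}\left(x_{k}\right)}$ that make up $\tau'$, the extra factors
\[
\prod_{k=1}^{\phi(x)}\frac{P_{k-\phi(x)}\left(x_{k}\right)}{P_{k-\phi(x)}\left(1-x_{k}\right)},
\]
and these equal $1$ only because $P_{k}\equiv\left(1/2,1/2\right)$ for $k\leq0$. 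For a half-stationary measure with $p\neq\frac{1}{2}$ the two cocycles genuinely differ (by $\left(\phi(x)-2\right)\log\frac{1-p}{p}$), so the identification cannot be taken for granted. Without this lemma, what Fact \ref{pro:tail of skew product} gives you is ergodicity of the $\psi$-skew product of $\tau$, which is not yet the statement that the odometer is type ${\rm III}_{1}$; in particular, the rigidity-time computation you invoke is a statement about $\tau^{(2^{n})'}$, i.e.\ about $\log\tau'$, and it bears on $e(\tau,\psi)$ only after $\psi$ has been identified with $\log\tau'$. This is a short computation rather than a flaw in strategy, but it is a necessary step your proposal omits.
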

As a corollary we get a negative answer to Krengel's question for
non singular $K$-shifts. 
\begin{cor}
\label{cor:type III_1}A conservative, ergodic, $K$$-$non singular
Bernoulli shift is either of type ${\rm III}_{1}$ or type ${\rm II}_{1}$. \end{cor}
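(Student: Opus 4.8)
The plan is to split into two cases according to the dichotomy recorded in Theorem \ref{thm: Kakutani}(4), namely whether or not $\sum_{k=1}^{\infty}a_{k}^{2}<\infty$. This is exactly the dichotomy between the existence and the non-existence of an absolutely continuous invariant probability measure, and it is the natural fork along which to prove the corollary.

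In the first case, $\sum_{k=1}^{\infty}a_{k}^{2}<\infty$, Theorem \ref{thm: Kakutani}(4) supplies an absolutely continuous $T$-invariant probability measure, which by ergodicity of $T$ (automatic from conservativity by Theorem \ref{thm: Kakutani}(3)) is equivalent to $P$: the density can vanish only on a $T$-invariant, hence $P$-null, set, since the full set would force the invariant measure to be zero. Thus $T$ preserves a probability in its own measure class and is by definition of type ${\rm II}_{1}$.

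In the second case, $\sum_{k=1}^{\infty}a_{k}^{2}=\infty$, there is no absolutely continuous invariant probability, so the hypotheses of Theorem \ref{thm: K and Maharam} are satisfied and the Maharam extension $\tilde{T}$ is a $K$-transformation. As recorded in the introduction, a $K$-transformation is weakly mixing and in particular ergodic; here conservativity of $\tilde{T}$ (the Maharam extension of the conservative $T$ is itself conservative) is what excludes the totally dissipative alternative permitted for measure-preserving $K$-transformations in Section 2, so that the standard argument, whereby triviality of the tail field forces every invariant set to be trivial, applies. By the definition of Krieger type recalled in Section 2.2, ergodicity of the Maharam extension is precisely the statement that $T$ is of type ${\rm III}_{1}$. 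Combining the two cases yields the asserted dichotomy.

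I do not expect a serious obstacle beyond invoking Theorem \ref{thm: K and Maharam}, which supplies all the real content; the rest is bookkeeping. The single point deserving care is the passage from the $K$-property of $\tilde{T}$ to its ergodicity in the infinite-measure-preserving setting, where conservativity of the Maharam extension is indispensable. It is worth emphasizing that this second case is exactly what rules out the intermediate types ${\rm II}_{\infty}$ and ${\rm III}_{\lambda}$ with $\lambda\neq1$: for any of these the essential value set would be a proper closed subgroup of $\RR$ (equal to $\{0\}$ in the ${\rm II}_{\infty}$ case, as recalled in Section 2.2), the Maharam extension would then fail to be ergodic, and this would contradict the $K$-property delivered by Theorem \ref{thm: K and Maharam}.
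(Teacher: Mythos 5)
Your overall route is the same as the paper's: split according to the existence of an a.c.i.p.\ (equivalently $\sum_{k}a_{k}^{2}<\infty$, by Theorem \ref{thm: Kakutani}), dispose of the first case as type ${\rm II}_{1}$, and in the second case combine Theorem \ref{thm: K and Maharam} with conservativity of the Maharam extension (Maharam's theorem) to conclude ergodicity of $\tilde{T}$ and hence type ${\rm III}_{1}$. The handling of the ${\rm II}_{1}$ case (equivalence, not mere absolute continuity, of the invariant probability via ergodicity) is fine.

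However, the one step you dismiss as bookkeeping is precisely the step the paper does not prove by hand but cites: ergodicity of a \emph{conservative, $\sigma$-finite infinite measure preserving} $K$-transformation is Parry's theorem \cite{Par}, not ``the standard argument.'' Your justification --- that conservativity excludes the totally dissipative alternative, ``so that the standard argument, whereby triviality of the tail field forces every invariant set to be trivial, applies'' --- is a genuine gap. The standard $K\Rightarrow$ ergodic argument is intrinsically a finite-measure argument: one approximates an invariant set by sets in $T^{-k}\mathcal{F}$ and then uses reverse martingale convergence of conditional expectations to the constant given by the trivial tail. In infinite measure this breaks down irreparably: constants are not integrable, and conditional expectations along the decreasing algebras $T^{-k}\mathcal{F}$ do not converge to $\mu(A)$. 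Indeed, triviality of the tail alone does not imply ergodicity in infinite measure (translation by $1$ on $\mathbb{R}$ with Lebesgue measure is an infinite measure preserving $K$-transformation, totally dissipative and not ergodic), and merely knowing that $\tilde{T}$ is \emph{not} in that dissipative regime does not resurrect the finite-measure proof; conservativity must enter in an essentially different way (via ratio ergodic theory), which is exactly the content of Parry's result. The same slip recurs in your closing remark, where non-ergodicity of the Maharam extension is said to ``contradict the $K$-property'': it contradicts $K$ \emph{together with} conservativity, again only through \cite{Par}. Replace the hand-wave by the citation to \cite{Par} and your proof coincides with the paper's.
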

\begin{proof}
Assume that there exists no a.c.i.p. By Maharam's theorem, the Maharam
extension is conservative and by Theorem \ref{thm: K and Maharam}
it is $K$ $\sigma$-finite measure preserving transformation. Therefore
by \cite{Par} it is ergodic and so the shift is of type ${\rm III_{1}}$. 
\end{proof}
A non singular transformation is of stable type ${\rm III_{\lambda}}$
if for every ergodic probability preserving transformation $\left(Y,\mathcal{C},\nu,S\right)$
the cartesian product $T\times S$ is of type ${\rm III}_{\lambda}$.
Bowen and Nevo \cite{BN} used actions of stable type ${\rm III_{\lambda}}$
in order to obtain ergodic theorems for measure preserving actions
of countable groups. They ask which groups admit an action of stable
type ${\rm III}_{\lambda}$ with $\lambda>0$. As a corollary of Theorem
\ref{thm: K and Maharam} we get the first examples of such $\ZZ$-actions. 
\begin{cor}
A conservative, ergodic, $K$$-$non singular Bernoulli shift such
that 
\[
\sum_{k=1}^{\infty}\left(P_{k}(0)-\frac{1}{2}\right)^{2}=\infty
\]
is of stable type ${\rm III}_{1}$. \end{cor}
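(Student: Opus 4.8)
The plan is to reduce the statement to ergodicity of the Maharam extension of $T\times S$ and then to exploit that, by Theorem~\ref{thm: K and Maharam}, the Maharam extension $\tilde T$ of $T$ is itself a $K$-transformation.

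First I would translate the hypothesis into the language of Theorem~\ref{thm: Kakutani}. By the parametrization \eqref{Half stationary product measure}, for $k\in\NN$ one has $P_k(0)=\frac{1-a_k}{2}$, so $\left(P_k(0)-\frac12\right)^2=\frac{a_k^2}{4}$, and the assumption $\sum_k\left(P_k(0)-\frac12\right)^2=\infty$ is exactly $\sum_k a_k^2=\infty$. By Theorem~\ref{thm: Kakutani}(4) this says the shift has no absolutely continuous invariant probability. Since the shift is moreover conservative, ergodic and $K$, Corollary~\ref{cor:type III_1} rules out type ${\rm II}_1$ and forces $T$ to be of type ${\rm III}_1$; in particular Theorem~\ref{thm: K and Maharam} applies and tells us that the Maharam extension $\tilde T$ is a conservative $\sigma$-finite measure preserving $K$-transformation.

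Next I would unwind the definition of stable type ${\rm III}_1$: I must show, for every ergodic probability preserving $\left(Y,\mathcal C,\nu,S\right)$, that $T\times S$ is of type ${\rm III}_1$, i.e.\ that the Maharam extension of $T\times S$ is ergodic. The key observation is that since $S$ preserves $\nu$ its Radon--Nykodym derivative is identically $1$, so the Radon--Nykodym cocycle of $T\times S$ coincides with that of $T$. Reordering coordinates then identifies the Maharam extension of $T\times S$ with $\tilde T\times S$, acting on the product of the Maharam space of $T$ with $\left(Y,\nu\right)$. Thus it suffices to prove that $\tilde T\times S$ is ergodic.

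Finally I would invoke the $K$-property of $\tilde T$. A $\sigma$-finite measure preserving $K$-transformation has countable Lebesgue spectrum, and such a transformation is weakly mixing in the strong sense that its product with any ergodic probability preserving transformation remains ergodic --- this is precisely the property already recorded in the introduction as a consequence of Theorem~\ref{thm: K and Maharam}. Applied to $S$ it gives ergodicity of $\tilde T\times S$, hence $T\times S$ is of type ${\rm III}_1$; as $S$ is arbitrary, $T$ is of stable type ${\rm III}_1$. I expect the one genuinely nontrivial point to be this last step: justifying in the $\sigma$-finite setting that countable Lebesgue spectrum forces the product with an arbitrary ergodic probability preserving factor to be ergodic. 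Everything preceding it is bookkeeping with the definitions and the already-established theorems.
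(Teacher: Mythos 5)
Your reduction coincides with the paper's: the hypothesis says $\sum_k a_k^2=\infty$, hence no a.c.i.p.\ by Theorem \ref{thm: Kakutani}, so Theorem \ref{thm: K and Maharam} together with Maharam's theorem makes $\tilde T$ a conservative, $\sigma$-finite measure preserving $K$-transformation; and the identification $M_{T\times S}=\tilde T\times S$ (valid exactly as you argue, since $S$ preserves $\nu$ and so the Radon--Nykodym cocycle of $T\times S$ lifts from $T$) reduces everything to ergodicity of $\tilde T\times S$. Up to that point you are following the paper's proof.

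The gap is in the step you yourself flagged as the nontrivial one: in the $\sigma$-finite setting it is \emph{false} that countable Lebesgue spectrum forces products with ergodic probability preserving transformations to be ergodic. The translation $x\mapsto x+1$ on $\RR$ with Lebesgue measure is a $\sigma$-finite measure preserving $K$-transformation (take $\mathcal{F}$ to be the $\sigma$-algebra generated by the Borel subsets of $(-\infty,0]$) with countable Lebesgue spectrum, yet it is totally dissipative, every function $e^{i\alpha x}$ is an $L^{\infty}$-eigenfunction, and its product with any ergodic probability preserving transformation is dissipative, hence not ergodic. Two ingredients are missing from a purely spectral argument: (i) ergodicity of an infinite measure preserving product requires \emph{conservativity} of the product, which no $L^{2}$-spectral hypothesis can deliver; (ii) even granted conservativity, the obstruction to ergodicity of $\tilde T\times S$ is the group of $L^{\infty}$-eigenvalues of $\tilde T$, and such eigenfunctions, having constant modulus, do not lie in $L^{2}$ of an infinite measure space, so the maximal spectral type does not see them (countable Lebesgue spectrum is rotation invariant and hence compatible with any eigenvalue group). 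The paper's proof supplies precisely these two ingredients: since $\tilde T$ is conservative and $\nu$ is a probability measure, $\tilde T\times S$ is conservative; then the results cited from \cite{Aa} (Thm 2.7.6 and Cor 3.1.8) yield ergodicity of this conservative product, using the $K$-property of $\tilde T$ itself --- not its $L^{2}$-spectrum --- to exclude nontrivial $L^{\infty}$-eigenvalues. You already have conservativity of $\tilde T$ in hand from your second step; the fix is to push it through to $\tilde T\times S$ and invoke the eigenvalue criterion for ergodicity of conservative products. Note also that the sentence in the introduction lists weak mixing and countable Lebesgue spectrum as two separate consequences of the $K$-property plus conservativity; it does not assert that the latter implies the former, and appealing to that sentence here would in any case be circular, since it summarizes exactly what this corollary's proof establishes.
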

\begin{proof}
Let $\left(Y,\mathcal{C},\nu,S\right)$ be an ergodic probability
preserving transformation, $\tilde{T}$ be the maharam extension of
the shift $T$ and $M_{T\times S}$ denote the Maharam extension of
$T\times S$.

Since the Maharam extension $\tilde{T}$ is conservative then $\tilde{T}\times S$
is conservative. It follows from \cite[Thm 2.7.6 and Corr 3.1.8]{Aa}
that $\tilde{T}\times S$ is ergodic. Since $\tilde{T}\times S=M_{T\times S}$,
it follows that the Maharam extension of $T\times S$ is ergodic and
$T\times S$ is of type ${\rm III}_{1}$. \end{proof}
\begin{rem*}
If $S$ is an infinite measure preserving trasformation such that
$\tilde{T}\times S$ is conservative then $\tilde{T}\times S$ is
ergodic. Thus by Dye's Theorem $\tilde{T}\times S$ is orbit equivalent
to $\tilde{T}$.
\end{rem*}

\subsection{The proof of Theorem \ref{thm: K and Maharam}}

By Theorem \ref{ext:(II.2)-Radon Nykodym derivatives}, the Radon-Nykodym
cocycle $\varphi(x):=\log T^{'}(x)$ is $\BB^{+}$ measurable. 

It follows that the Maharam extension of $T$ is the natural extension
of the skew product\\
 $\left(X^{+}\times\RR,\BB^{+}\otimes\BB_{\RR},P^{+}\otimes e^{s}ds,\sigma_{\varphi}\right).$
Since a transformation is $K$ if and only if it is a natural extension
of an exact transformation, in order to show that the Maharam extension
of the two sided shift is $K$, we will show that the skew product
extension $\sigma_{\log T'}$ is exact. 

This will be done in two steps. First we show that the odometer $\left(X^{+},\BB^{+},P^{+},\tau\right)$
is of type ${\rm III}_{1}$ and then we show that 
\[
\Tau\left(\sigma_{\log T'}\right)=\mathcal{R}\left(\tau_{\log\tau'}\right),
\]
thus the tail action is ergodic. 
\begin{lem}
\label{lem:conservative shift and odometer}Let $P$ be as in \eqref{Half stationary product measure}.
If the shift is conservative and there exists no a.c.i.p then:

(1) There exists a subsequence $\left\{ a_{n_{k}}\right\} $ such
that ${\displaystyle \lim_{k\to\infty}a_{n_{k}}=0}$.

(2) The odometer $\left(X^{+},\BB^{+},P^{+},\tau\right)$ is of type
${\rm III}_{1}$. \end{lem}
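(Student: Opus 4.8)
The plan is to establish the two assertions in turn: deduce (1) from conservativity by a Hellinger estimate together with the dissipativity criterion, and then feed the resulting subsequence into an essential-value computation for the odometer cocycle.

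\textbf{Part (1).} I would argue the contrapositive: if $\liminf_{n}\left|a_{n}\right|=\delta>0$, then $T$ is dissipative. Extending $a_{k}=0$ for $k\leq0$ and writing $d\left(P,T_{*}^{n}P\right)$ for the Hellinger expression in \eqref{eq: Kakutanis observation}, the $k$-th summand compares the marginals $P_{k}$ and $P_{k-n}$, hence vanishes only when $a_{k}=a_{k-n}$. For $1\leq k\leq n$ one compares $P_{k}$ (parameter $a_{k}$, with $\left|a_{k}\right|\geq\delta$ once $k$ is large) against the symmetric $P_{k-n}$, so each such term is bounded below by a constant $c(\delta)>0$; thus $d\left(P,T_{*}^{n}P\right)\geq c(\delta)\,n$ up to a bounded correction. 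By \eqref{eq: Kakutanis observation} this forces $\rho\left(P,T_{*}^{n}P\right)\leq e^{-c'n}$, which is summable, so $T$ is zero type and Lemma \ref{lem:Dissipative zero type} makes it dissipative, contradicting conservativity. Hence $\liminf_{n}\left|a_{n}\right|=0$ and one extracts $a_{n_{k}}\to0$.

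\textbf{Part (2), setup.} Put $\beta_{k}:=\log\frac{P_{k}(1)}{P_{k}(0)}=\log\frac{1+a_{k}}{1-a_{k}}$, so $\left|\beta_{k}\right|\geq2\left|a_{k}\right|$. Since $\Ra_{\tau}=\Tau(\sigma)$ and the $\tau$-saturated sets are precisely the tail events, Kolmogorov's $0$--$1$ law shows $\tau$ is ergodic; it remains to prove $e\left(\tau,\log\tau'\right)=\RR$. I would use that $e\left(\tau,\log\tau'\right)$ is a closed subgroup of $\RR$, so it suffices to produce essential values that are nonzero and arbitrarily close to $0$. As $\Ra_{\tau}=\Tau(\sigma)$ consists of finite modifications, flipping the coordinates in a finite set $F$ changes $\log\tau'$ by $\sum_{j\in F}\pm\beta_{j}$, and restricting to the positive-measure set on which the flipped coordinates have prescribed values makes this value deterministic. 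Approximating an arbitrary positive-measure set by a cylinder on coordinates $\leq N$ and taking $F\subset(N,\infty)$ yields the criterion
\[
t\in e\left(\tau,\log\tau'\right)\quad\Longleftrightarrow\quad t\in\bigcap_{N\in\NN}\overline{\left\{ \sum_{j\in F}\pm\beta_{j}:\ F\subset(N,\infty)\ \text{finite}\right\} }.
\]
In particular every limit point of $\left(\beta_{j}\right)$ is an essential value; these singleton values are exactly the ones the rigidity times deliver through \eqref{eq: RN for odometer along rigidity times}.

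\textbf{Part (2), conclusion and main obstacle.} The non-singularity condition \eqref{eq:non singular condition} forces $\sum_{k}\left(a_{k}-a_{k+1}\right)^{2}<\infty$, hence $a_{k}-a_{k+1}\to0$, so the limit-point set $\mathcal{A}$ of $\left(a_{k}\right)$ is a closed interval, and by Part (1) it contains $0$. If $\mathcal{A}$ is nondegenerate it contains an interval with endpoint $0$, and since $a\mapsto\log\frac{1+a}{1-a}$ is a homeomorphism near $0$, the essential-value set then contains an interval with endpoint $0$. If instead $\mathcal{A}=\{0\}$, then $a_{k}\to0$; the absence of an a.c.i.p.\ gives $\sum a_{k}^{2}=\infty$ by Theorem \ref{thm: Kakutani}, whence $\sum_{k}\left|\beta_{k}\right|=\infty$ while $\beta_{k}\to0$, and a greedy choice of signs makes the tail signed sums dense. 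In either case the closed subgroup $e\left(\tau,\log\tau'\right)$ meets every neighbourhood of $0$ in a nonzero point, so it equals $\RR$; with ergodicity this says $\tau$ is of type ${\rm III}_{1}$. The hard part is exactly this last step, namely excluding that $e\left(\tau,\log\tau'\right)$ is a proper lattice $c\ZZ$ (type ${\rm III}_{\lambda}$): what rescues it is that non-singularity forces the parameters to vary slowly, so their limit points form an \emph{interval} rather than a discrete set, and combining this with $0\in\mathcal{A}$ (Part (1)) and $\sum a_{k}^{2}=\infty$ (no a.c.i.p.) is precisely what rules out the lattice case. The Hellinger lower bound of Part (1) and the measure-theoretic realization of the tail signed sums are the routine ingredients.
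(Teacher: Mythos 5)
Your Part (1) is correct and is essentially the paper's own argument: if $0$ is not a limit point of $\left(a_{n}\right)$ then $d\left(P,T_{*}^{n}P\right)$ grows linearly in $n$, so by \eqref{eq: Kakutanis observation} the integrals $\rho\left(P,T_{*}^{n}P\right)$ are summable and Lemma \ref{lem:Dissipative zero type} forces dissipativity, contradicting conservativity. The problem is in Part (2), and it is exactly your claimed equivalence
\[
t\in e\left(\tau,\log\tau'\right)\iff t\in\bigcap_{N\in\NN}\overline{\left\{ \sum_{j\in F}\pm\beta_{j}:\ F\subset(N,\infty)\ \text{finite}\right\} }.
\]
Only the forward implication holds. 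The backward implication is false in general: take $a_{k}=1/k$. Then \eqref{eq:non singular condition} holds and $\sum_{k}a_{k}^{2}<\infty$, so by Kakutani's theorem $P^{+}\sim\prod\left(\frac{1}{2},\frac{1}{2}\right)$, the odometer preserves a finite equivalent measure, and $e\left(\tau,\log\tau'\right)=\{0\}$; yet $\beta_{k}\asymp 2/k$ satisfies $\beta_{k}\to0$ and $\sum_{j>N}\beta_{j}=\infty$ for every $N$, so the tail signed sums are dense and your right-hand side is all of $\RR$. The reason the implication fails is quantitative: to certify $t$ as an essential value you must find returns inside \emph{every} positive-measure set $A$. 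Approximating $A$ by a cylinder $C=[c]_{1}^{N}$ with $P^{+}\left(A\cap C\right)>(1-\delta)P^{+}(C)$ only helps if the holonomy realizing the value $t$ is defined on a subset of $C$ of measure at least a \emph{fixed proportion} of $P^{+}(C)$ (exceeding $\delta$). Flipping the coordinates in $F$ to prescribed values confines you to a set of relative measure $\prod_{j\in F}P_{j}(\cdot)$, which tends to $0$ as $\left|F\right|$ grows, so a greedy choice of signs with unboundedly many terms gives no such control.

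This is precisely why the paper splits Part (2) into two cases. When the limit-point set contains a nondegenerate interval $[0,\alpha]$, single-coordinate flips suffice: if $a_{n_{k}}\to p$, the rigidity identity \eqref{eq: RN for odometer along rigidity times} realizes the value $\log\frac{1+a_{n_{k}}}{1-a_{n_{k}}}$ on $C_{n_{k}}=C\cap\left\{ x_{n_{k}}=0\right\} $, whose measure is $\frac{1-a_{n_{k}}}{2}P^{+}(C)$ --- a uniform proportion --- so \cite[Lemma 2.1]{DKQ} applies and every $\log\frac{1+p}{1-p}$ with $p\in[0,\alpha]$ is an essential value. Your treatment of this case is salvageable once you replace the false criterion by this quantitative return estimate. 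But in the case $\mathfrak{A}=\{0\}$ (i.e. $a_{k}\to0$ and, by absence of an a.c.i.p., $\sum a_{k}^{2}=\infty$), single flips only yield the essential value $0$, multi-coordinate flips are unavoidable, and one needs the measure-theoretic pairing argument behind \cite[Prop. 3.1]{DKQ}, which is what the paper invokes; density of the signed sums, which is all your greedy argument provides, is strictly weaker (as the $a_{k}=1/k$ example shows) and does not substitute for it.
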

\begin{proof}
Denote by 
\[
\mathfrak{A}=\left\{ a\in\mathbb{R}:\ \exists n_{k}\to\infty,\ a_{n_{k}}\xrightarrow[k\to\infty]{}a\right\} 
\]
the set of limit points of the sequence $\left\{ a_{n}\right\} $. 

(1) Assume that $0\notin\mathfrak{A}.$ We will show that then $\sum_{n=1}^{\infty}\rho\left(P,T_{*}^{n}P\right)<\infty$
and so by Lemma \ref{lem:Dissipative zero type} $T$ is dissipative.

Since $0$ is not a limit point of $\left\{ a_{n}\right\} $, there
exists an $\epsilon>0$ and $N\in\NN$ such that for all $i>N$,
\begin{equation}
\sqrt{\frac{1-a_{i}}{2}}-\sqrt{\frac{1}{2}}>\epsilon.\label{eq: if 0 is not a limit point}
\end{equation}
 Therefore for every $n>N$,
\begin{eqnarray*}
d\left(P,T_{*}^{n}P\right) & = & \sum_{i\in\ZZ}\left\{ \left(\sqrt{P_{i}(0)}-\sqrt{P_{i-n}(0)}\right)^{2}+\left(\sqrt{P_{i}(1)}-\sqrt{P_{i-n}(1)}\right)^{2}\right\} \\
 & \geq & \sum_{i=N}^{n}\left(\sqrt{P_{i}(0)}-\sqrt{P_{i-n}(0)}\right)^{2}\\
 & = & \sum_{i=N}^{n}\left(\sqrt{\frac{1-a_{i}}{2}}-\sqrt{\frac{1}{2}}\right)^{2}.
\end{eqnarray*}
The last equality follows from the fact that $P_{k}=\left(\frac{1}{2},\frac{1}{2}\right)$
for $k\in\ZZ\backslash\NN$. Therefore by \eqref{eq: if 0 is not a limit point}
we have that 
\[
d\left(P,T_{*}^{n}P\right)\geq\left(n-N\right)\epsilon^{2}.
\]
The conclusion follows from \eqref{eq: Kakutanis observation} and
Lemma \ref{lem:Dissipative zero type}. 

(2) Let $P$ be a half stationary product measure such that the shift
is conservative and there is no a.c.i.p. 

One can show that we can choose a subsequence such that ${\displaystyle \lim_{n\to\infty}}a_{n_{k}}=0$
and $\sum_{k=1}^{\infty}a_{n_{k}}^{2}=\infty$ and then use standard
techniques. 

Alternatively we can argue as follows: Since there is no a.c.i.p.
then 
\[
\sum_{n=1}^{\infty}a_{n}^{2}=\infty.
\]
Therefore if $\mathfrak{A}=\{0\}$ ($\lim a_{n}=0$) then the odometer
is of type ${\rm III}_{1}$ by \cite[Prop. 3.1.]{DKQ}. 

Otherwise there is $0<\alpha<1$ such that $\left\{ 0,\alpha\right\} \subset\mathfrak{A}$.
It follows from the non-singularity condition \eqref{eq:non singular condition}
that 
\[
[0,\alpha]\subset\mathfrak{A}.
\]
We show that $e\left(\tau,\log\tau'\right)=\RR$ by showing that for
every $p\in\mathfrak{A}\backslash\left\{ -1,1\right\} $, 
\[
\log\frac{1+p}{1-p}\in e\left(\tau,\log\tau'\right),
\]
so the set of essential values contains an interval. This will be
done by establishing the conditions of \cite[Lemma 2.1]{DKQ}. 

Let $p\in\mathfrak{A}$ and $a_{n_{k}}\xrightarrow[k\to\infty]{}p$. 

Let 
\[
C=\left[c\right]_{1}^{n}:=\left\{ x\in X^{+}:\ x_{i}=c_{i}\ \forall i\in[1,n]\right\} .
\]
be a cylinder set and write 
\[
C_{n_{k}}=C\cap\left\{ x\in X^{+}:x_{n_{k}}=0\right\} .
\]
It follows from \eqref{eq: RN for odometer along rigidity times}
that for every $k\in\NN$ such that $n_{k}>n$,
\[
\left.\log\tau^{\left(2^{n_{k}}\right)'}\right|_{C_{n_{k}}}=\log\frac{1+a_{n_{k}}}{1-a_{n_{k}}}.
\]
Therefore 
\begin{eqnarray}
P^{+}\left(C\cap\tau^{-2^{n_{k}}}C\cap\left[\log\tau^{\left(2^{n_{k}}\right)'}=\log\frac{1+a_{n_{k}}}{1-a_{n_{k}}}\right]\right) & \geq & P^{+}\left(C_{n_{k}}\right)\label{eq: return to cylinder sets}\\
 & = & \left(\frac{1-a_{n_{k}}}{2}\right)P^{+}\left(C\right).\nonumber 
\end{eqnarray}
Given $\epsilon>0$, we can choose $k$ large enough such that 
\[
\left(\frac{1-a_{n_{k}}}{2}\right)>\frac{1-p}{4}:=\beta>0.
\]
and 
\[
\left|\log\frac{1+a_{n_{k}}}{1-a_{n_{k}}}-\log\frac{1+p}{1-p}\right|<\epsilon.
\]
Then by \eqref{eq: return to cylinder sets} we get 
\[
P^{+}\left(C\cap\tau^{-2^{n_{k}}}\cap\left[\left|\log\tau^{\left(2^{n_{k}}\right)'}-\log\frac{1+p}{1-p}\right|<\epsilon\right]\right)\geq\beta P^{+}(C).
\]
Thus the conditions of \cite[Lemma 2.1]{DKQ} are satisfied with 
\[
\gamma=\left(\underset{n_{k}-1}{\underbrace{0,..,0},1},\underbar{0}\right)
\]
and 
\[
\mathscr{U}=C_{n_{k}}.
\]
Hence $\log\frac{1+p}{1-p}$ is an essential value for $\log\tau'$. \end{proof}
\begin{lem}
\label{lem: psi is logtau'}Let $P$ be defined by \eqref{Half stationary product measure},
then 
\begin{eqnarray*}
\psi(x) & = & \log\tau'(x),
\end{eqnarray*}
where $\psi(x)$ is the tail-cocycle corresponding to $\varphi=\log T'$.\end{lem}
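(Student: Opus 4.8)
The plan is to unwind the definition of $\psi$ and collapse everything to a finite telescoping sum. By construction $\psi(x)=\hat\varphi(x,\tau x)$, where $\hat\varphi$ is the orbital cocycle on $\Tau_\sigma=\Ra_\tau$ attached to $\varphi=\log T'$, so that
\[
\psi(x)=\sum_{n=0}^{\infty}\left\{ \varphi(\sigma^n x)-\varphi(\sigma^n\tau x)\right\} .
\]
The first thing I would record is that $x$ and $\tau x$ differ only in their first $\phi(x)$ coordinates: writing $\phi=\phi(x)$, the point $x$ begins with $\phi-1$ ones and a $0$ in position $\phi$, while $\tau x$ begins with $\phi-1$ zeros and a $1$ in position $\phi$, and the two agree from coordinate $\phi+1$ onward. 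Hence $\sigma^n x=\sigma^n\tau x$ for every $n\geq\phi$, and the infinite sum truncates to $\sum_{n=0}^{\phi-1}$.

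Next I would make $\varphi$ explicit. By Theorem \ref{ext:(II.2)-Radon Nykodym derivatives} together with the half stationarity $P_k=\left(\frac{1}{2},\frac{1}{2}\right)$ for $k\leq0$, the terms with $k\leq0$ are trivial and
\[
\varphi(x)=\log T'(x)=\sum_{k=1}^{\infty}\log\frac{P_{k-1}(x_k)}{P_k(x_k)}.
\]
Substituting $\sigma^n x$, whose $k$-th coordinate is $x_{n+k}$, and reindexing by $j=n+k$, the difference $\varphi(\sigma^n x)-\varphi(\sigma^n\tau x)$ is supported on $n+1\leq j\leq\phi$, because $x_j=(\tau x)_j$ for $j>\phi$ and the reindexed sum starts at $j=n+1$. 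Summing over $0\leq n\leq\phi-1$ and interchanging the order of the two finite sums, for each fixed $j\in\{1,\dots,\phi\}$ the inner sum over $n$ telescopes: setting $m=j-n$,
\[
\sum_{n=0}^{j-1}\log\frac{P_{j-n-1}(x_j)}{P_{j-n}(x_j)}=\sum_{m=1}^{j}\log\frac{P_{m-1}(x_j)}{P_m(x_j)}=\log\frac{P_0(x_j)}{P_j(x_j)},
\]
and likewise for $\tau x$.

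The final step is to read off the answer. Using $P_0=\left(\frac{1}{2},\frac{1}{2}\right)$, the telescoped contribution of coordinate $j$ is $\log\frac{P_j((\tau x)_j)}{P_j(x_j)}$, so that
\[
\psi(x)=\sum_{j=1}^{\phi}\log\frac{P_j((\tau x)_j)}{P_j(x_j)}.
\]
Inserting $(x_j,(\tau x)_j)=(1,0)$ for $j<\phi$ and $(0,1)$ for $j=\phi$ yields exactly $\log\frac{P_\phi(1)}{P_\phi(0)}+\sum_{k=1}^{\phi-1}\log\frac{P_k(0)}{P_k(1)}=\log\tau'(x)$, which is the formula recorded for $\tau'$. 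I expect the only genuine obstacle to be the bookkeeping: correctly tracking the index shift induced by $\sigma^n$, justifying the interchange of the (finite) sums, and checking that the per-coordinate telescope leaves precisely one boundary factor. The half stationary normalization $P_0=\left(\frac{1}{2},\frac{1}{2}\right)$ is what makes the surviving boundary term clean, while the agreement of $x$ and $\tau x$ beyond coordinate $\phi$ is what guarantees finiteness and convergence throughout.
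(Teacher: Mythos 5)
Your proof is correct and follows essentially the same route as the paper: truncate the tail sum at $\phi(x)$ using that $x$ and $\tau x$ agree beyond coordinate $\phi(x)$, expand $\varphi=\log T'$ via Kakutani's Radon--Nikodym formula, and let the half-stationary normalization $P_k=\left(\tfrac{1}{2},\tfrac{1}{2}\right)$, $k\leq0$, kill the boundary factors. The only difference is cosmetic: the paper collapses $\sum_{k=0}^{\phi(x)-1}\left\{ \varphi\left(\sigma^{k}x\right)-\varphi\left(\sigma^{k}\tau x\right)\right\}$ into $\varphi_{\phi(x)}(x)-\varphi_{\phi(x)}(\tau x)$ by the cocycle identity and then quotes the $n$-step formula for $T^{n'}$ from Theorem \ref{thm: Kakutani}, whereas you re-derive that same collapse by telescoping the one-step formula coordinate by coordinate.
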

\begin{proof}
Since 
\[
\sigma^{n}x=\sigma^{n}\tau x\iff n\geq\phi(x)
\]
it follows that
\[
\psi(x)=\sum_{k=0}^{\phi(x)-1}\left\{ \varphi\left(\sigma^{k}x\right)-\varphi\left(\sigma^{k}\tau x\right)\right\} =\varphi_{\phi(x)}(x)-\varphi_{\phi(x)}\left(\tau x\right).
\]
This together with Theorem \ref{thm: Kakutani} and the fact that
\[
\left(\tau x\right)_{k}=\begin{cases}
1-x_{k}, & k\leq\phi(x)\\
x_{k} & k>\phi(x)
\end{cases},
\]
yields 
\begin{eqnarray*}
\psi(x) & = & \log\left(\prod_{k=1}^{\phi(x)}\left[\frac{P_{k-\phi(x)}\left(x_{k}\right)}{P_{k}\left(x_{k}\right)}\left/\frac{P_{k-\phi(x)}\left(1-x_{k}\right)}{P_{k}\left(1-x_{k}\right)}\right.\right]\right)\\
 & = & \log\left(\prod_{k=1}^{\phi(x)}\left[\frac{P_{k-\phi(x)}\left(x_{k}\right)}{P_{k-\phi(x)}\left(1-x_{k}\right)}\cdot\frac{P_{k}\left(1-x_{k}\right)}{P_{k}\left(x_{k}\right)}\right]\right).
\end{eqnarray*}
Since for all $k<0$, $P_{k}\equiv\left(1/2,1/2\right)$, 
\[
\forall k\leq\phi(x),\ \frac{P_{k-\phi(x)}\left(x_{k}\right)}{P_{k-\phi(x)}\left(1-x_{k}\right)}=1,
\]
we see that 
\[
\psi(x)=\log\left(\prod_{k=1}^{\phi(x)}\frac{P_{k}\left(1-x_{k}\right)}{P_{k}\left(x_{k}\right)}\right)=\log\tau'(x).
\]

\end{proof}
\begin{proof}[Proof of Theorem \ref{thm: K and Maharam}] Since the
Maharam extension $\tilde{T}$ is the natural extension of $\sigma_{\varphi}$,
we need to show that $\sigma_{\varphi}$ is exact. 

The odometer $\tau$ is the tail action of the shift $\sigma$. It
follows from Lemma \ref{lem: psi is logtau'} that, 
\[
\Tau\left(\sigma_{\varphi}\right)=\Ra\left(\tau_{\log\tau'}\right).
\]
By Lemma \ref{lem:conservative shift and odometer} $\tau_{\log\tau'}$
is ergodic ( $\tau$ is type ${\rm III}_{1}$) and therefore $\sigma_{\varphi}$
is exact. \end{proof}

\subsection{Countable State space}

By following the same arguments of the previous section, one can show
that if $X=\{1,..,n\}^{\ZZ}$, $T$ is the full shift and $P$ is
a half stationary measure on $X$ which is not equivalent to a stationary
product measure, then the Maharam extension is $K$. 

Consider now the full shift on a countable state space. That is $X=\NN^{\ZZ},$
$T$ is the shift and there exists a proabability measure $p\in\mathcal{P}\left(\NN\right)$
and a sequence $\left\{ p_{j}(\cdot)\right\} _{j=1}^{\infty}$ of
probability measures on $\NN$ so that 
\begin{equation}
P_{k}\left(\cdot\right)=\begin{cases}
p(\cdot), & k\leq0\\
p_{k}\left(\cdot\right), & k>0
\end{cases}.\label{eq:countable half stationary measure}
\end{equation}
The condition for non singularity of the shift becomes now 
\[
d\left(P,T_{*}P\right):=\sum_{k=1}^{\infty}\sum_{j\in\NN}\left(\sqrt{P_{k}(j)}-\sqrt{P_{k-1}(j)}\right)^{2}<\infty.
\]
and it is still true that there exists constants $M,m>0$ so that
for every $n\in\NN$ 
\[
m\cdot d\left(P,T_{*}^{n}P\right)\leq-\log\rho\left(P,T_{*}^{n}P\right)\leq M\cdot d\left(P,T_{*}^{n}P\right).
\]
Therefore we can As before let $\sigma:\NN^{\NN}\circlearrowleft$
be the one sided shift and $P^{+}=\prod_{k\in\NN}P_{k}$. The tail
relation of $\sigma$ is still hyperfinite. For example if we choose
\[
\Tau_{N}=\left\{ (x,y):\ y=x\ {\rm or}\ \exists N\in\NN,\ \forall n>N,x_{n}=y_{n}\ {\rm and\ }\max_{1\leq j\leq N}\left(x_{j},y_{j}\right)\leq N\right\} ,
\]
then $\Tau_{N}$ is an increasing sequence of finite subequivalence
relations with $\cup_{N}\Tau_{N}=\Tau(\sigma)$. However, unlike the
finite state space case, the odometer is no longer the tail action.
In this case it is easier to look at the holonomys of $\Tau(\sigma)$.
An holonomy is a one to one transformations $\phi:Dom(\phi)\to Ran(\phi)$,
here $Dom(\phi),Ran(\phi)\subset X=\NN^{\NN}$, where for every $x\in Dom\mbox{\ensuremath{\left(\phi\right)}}$,
\[
\left(x,\phi(x)\right)\in\Tau\left(\sigma\right).
\]
The ratio set condition for the tail action can be reformulated in
the following way. 

An element $r\in\RR$ is in $R\left(V\right),$ here $\Tau\left(\sigma\right)=\Ra\left(V\right)$,
if for every $A\in\BB_{+}$ and $\epsilon>0$, there exists a $\Tau(\sigma)$
holonomy $\phi$ with $Ran(\phi),Dom(\phi)\subset A$ so that 
\[
\frac{d\phi_{*}P}{dP}=r\pm\epsilon.
\]
For the shift one can generalize Lemma \ref{lem:conservative shift and odometer}
for the countable state case using holonomies of the form $f:[a]_{1}^{n}\to[b]_{1}^{n}$,
\[
f(a_{1},..,a_{n},x):=\left(b_{1},b_{2},..,b_{n},x\right).
\]
In this way one can prove the same result. Either $P\sim\prod p$
or it's Maharam extension is a $K$-transformation.

\section{Examples }

In \cite{Kre,Ham} examples of conservative shifts were constructed
without an a.c.i.p. It follows from Theorem \ref{thm: K and Maharam}
that the Maharam extension is $K$ and that those shifts are of type
${\rm III}_{1}$. In these examples one has 
\begin{equation}
\lim_{n\to\infty}P_{n}(0)=\frac{1}{2}.\label{eq: wrong criteria for conservativity}
\end{equation}
We will give two more examples here. One of a dissipative half stationary
shift with 
\[
\lim_{n\to\infty}P_{n}(0)=\frac{1}{2}
\]
which shows that \eqref{eq: wrong criteria for conservativity} is
not sufficient for conservativity. The other is a conservative half
stationary product measure with 
\[
\liminf_{n\to\infty}P_{k}(0)=\frac{1}{4},\ \limsup_{n\to\infty}P_{k}(0)=\frac{1}{2},
\]
Together those examples show that Lemma \ref{lem:conservative shift and odometer}.1
is all we can say about limit points of $a_{n}$. 
\begin{rem}
Michael Grewe in his Master thesis \cite{Gre} has constructed a different
example of a dissipative shift with $P_{k}(0)\to\frac{1}{2}$. His
method relies on the strong law of large numbers and an inductive
construction. We include here a new example as the method of proof
and the measure are more simple.
\end{rem}

\subsection{Dissipative example. }

Define a product measure by 
\[
P_{n}(0)=\begin{cases}
\frac{1}{2}-\frac{2}{n}, & n\geq2\\
\frac{1}{2} & n<2
\end{cases}.
\]
Since 
\[
\sum_{k=0}^{\infty}\left\{ \left(\sqrt{P_{k}(0)}-\sqrt{P_{k+1}(0)}\right)^{2}+\left(\sqrt{P_{k}(1)}-\sqrt{P_{k+1}(1)}\right)^{2}\right\} <\infty,
\]
the shift $\left(\{0,1\}^{\ZZ},P,T\right)$ is non singular. In addition
\begin{eqnarray*}
d\left(P,P\circ T^{n}\right) & \geq & \sum_{k=0}^{n}\left\{ \left(\sqrt{P_{k}(0)}-\sqrt{P_{k-n}(0)}\right)^{2}+\left(\sqrt{P_{k}(1)}-\sqrt{P_{k-n}(1)}\right)^{2}\right\} \\
 & = & \sum_{k=2}^{n}\left\{ 2-\sqrt{1-\frac{4}{k}}-\sqrt{1+\frac{4}{k}}\right\} .
\end{eqnarray*}
It follows from the Taylor expansion of $\sqrt{1+x}$ that 
\[
2-\sqrt{1-\frac{2}{k}}-\sqrt{1+\frac{2}{k}}=\frac{2\sqrt{2}-1}{k}+O_{k\to\infty}\left(\frac{1}{k^{2}}\right).
\]
Therefore there exists a constant $C\in\RR$ such that 
\[
d\left(P,P\circ T^{n}\right)\geq\left(2\sqrt{2}-1\right)\sum_{k=2}^{n}\frac{1}{k}+C.
\]
Since $ $$\sum_{k=2}^{n}\frac{1}{k}\propto\log(n)$ and $\log\rho\left(P,P\circ T^{n}\right)\propto d\left(P,P\circ T^{n}\right)$,
it follows that 
\[
\sum_{n=1}^{\infty}\rho\left(P,P\circ T^{n}\right)<\infty.
\]
By Lemma \ref{lem:Dissipative zero type} the shift is dissipative.

\subsection{The ``weird'' conservative example}

Given $k\in\NN$ set 
\[
\lambda_{n}^{(k)}=\begin{cases}
1+\frac{n}{2^{k}}, & n\in\left[0,2^{k-1}\right]\\
2-\frac{n}{2^{k}}, & n\in\left[2^{k-1},2\cdot2^{k-1}\right]\\
1, & {\rm otherwise}
\end{cases}.
\]
and let $P^{(k)}$ be the product measure on $X$ with factor measures
\[
P_{n}^{(k)}(1)=\frac{\lambda_{n}^{(k)}}{1+\lambda_{n}^{(k)}}=1-P_{n}^{(k)}(0).
\]

Our example of a conservative product measure with 
\[
\limsup_{k\to\infty}P_{k}(1)=\frac{3}{4}>\frac{1}{2}=\liminf P_{k}(1)
\]
consists of large intervals where $P_{k}(0)$ is exactly $\frac{1}{2}$
followed by large intervals of the form $\left[N,N+2^{k}\right]$
where $P_{n}(1)$ equals $P_{n-N}^{(k)}(1)$ (a slow increase to $\frac{3}{4}$
followed by a small decrease back to $\frac{1}{2}$). Then this segment
is followed by a larger segment where $P_{k}(0)=\frac{1}{2}$ and
so on. The main difficulty in showing that 
\[
\sum T^{n'}=\infty
\]
is in showing that for some $k's$ we have $N(k)$ such that 
\[
T^{k'}(w)\approx\prod_{n=0}^{N(k)}\frac{P_{n-k}\left(w_{n}\right)}{P_{n}\left(w_{n}\right)}
\]
on a set of positive measure. For that purpose we need the following
lemma which states that if $k$ is large enough with respect to $m$
then the derivatives of the shift under the measure $P^{(k)}$ are
bounded from below up to time $m$ on a set of large measure. 
\begin{lem}
\label{lem: P^(k) has small derivatives}Given $m$ and $t$ there
exists a $k\in\NN$ such that 
\[
P^{(k)}\left(\inf_{l\leq m}T_{(k)}^{l'}(w)\geq e^{-2^{-t}}\right)\geq1-2^{-t}.
\]
\end{lem}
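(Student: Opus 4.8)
The plan is to write $\log T_{(k)}^{l'}$ as a sum of independent random variables whose mean and variance both tend to $0$ as $k\to\infty$, and then to handle all $l\le m$ simultaneously by a union bound. By Theorem \ref{thm: Kakutani}, for each $l$ and $w$ we have
\[
\log T_{(k)}^{l'}(w)=\sum_{j\ge 1}Y_j^{(l)}(w),\qquad Y_j^{(l)}(w):=\log\frac{P_{j-l}^{(k)}(w_j)}{P_j^{(k)}(w_j)},
\]
and under $P^{(k)}$ the coordinates $w_j$ are independent, so the $Y_j^{(l)}$ are independent. The structural input I would use is that $n\mapsto\lambda_n^{(k)}$ is a continuous tent function, equal to $1$ off $[0,2^k]$, taking values in $[1,3/2]$, and Lipschitz with constant $2^{-k}$; hence $|\lambda_j^{(k)}-\lambda_{j-l}^{(k)}|\le l\,2^{-k}\le m\,2^{-k}$ for $l\le m$. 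Since $s\mapsto s/(1+s)$ is smooth and the factor probabilities stay in $[2/5,3/5]$, this yields a uniform bound $|Y_j^{(l)}|\le Cm\,2^{-k}$, and $Y_j^{(l)}\equiv 0$ unless $\lambda_j^{(k)}\neq\lambda_{j-l}^{(k)}$, which confines $j$ to an interval of length at most $2^k+m$.

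The hard part, and the reason a naive estimate fails, is that these two bounds alone give only $|\log T_{(k)}^{l'}|\le(2^k+m)\cdot Cm\,2^{-k}=O(m)$, which does not tend to $0$. The gain must come from a second-order cancellation. Writing $E_k$ for expectation under $P^{(k)}$, one has $E_k[Y_j^{(l)}]=-D(P_j^{(k)}\,\|\,P_{j-l}^{(k)})\le 0$, the negative Kullback--Leibler divergence, and since the factor measures are bounded away from $0$ and within $Cm\,2^{-k}$ of each other, a $\chi^2$ (Pinsker-type) bound gives $0\le -E_k[Y_j^{(l)}]\le C'(m\,2^{-k})^2$. Summing over the $O(2^k+m)$ nonzero indices produces
\[
-C''m^2 2^{-k}\le E_k\big[\log T_{(k)}^{l'}\big]\le 0 .
\]
The same per-term bound gives $\mathrm{Var}_k(Y_j^{(l)})\le E_k[(Y_j^{(l)})^2]\le(Cm\,2^{-k})^2$, so by independence $\mathrm{Var}_k(\log T_{(k)}^{l'})\le C''m^2 2^{-k}$ as well. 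Thus both the mean and the variance are $O(m^2 2^{-k})$, uniformly in $l\le m$: the first-order terms cancel in expectation and the fluctuations are killed by the small variance.

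I would then finish with Chebyshev plus a union bound. Fix $k$ large enough that $C''m^2 2^{-k}\le\tfrac12 2^{-t}$; then for each $l\le m$ the mean lies in $[-\tfrac12 2^{-t},0]$, so
\[
P^{(k)}\big(\log T_{(k)}^{l'}<-2^{-t}\big)\le P^{(k)}\big(\big|\log T_{(k)}^{l'}-E_k[\log T_{(k)}^{l'}]\big|>\tfrac12 2^{-t}\big)\le\frac{C''m^2 2^{-k}}{(2^{-t}/2)^2}=4\cdot 4^{t}C''m^2 2^{-k}.
\]
Summing over $l=1,\dots,m$ (the case $l=0$ is trivial since $T_{(k)}^{0'}\equiv 1$) bounds $P^{(k)}\big(\inf_{l\le m}T_{(k)}^{l'}<e^{-2^{-t}}\big)$ by $4\cdot 4^{t}C''m^3 2^{-k}$, which is at most $2^{-t}$ once $2^{-k}\le(4^{1+t}C''m^3)^{-1}2^{-t}$, i.e. for any $k\gtrsim 3t+3\log_2 m+O(1)$. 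This gives the asserted $k$ and proves the lemma. The only genuinely delicate point is the mean estimate: everything hinges on recognizing that the relative entropy between nearby factor measures is quadratic, $O((m\,2^{-k})^2)$, rather than linear in the perturbation, so that summing over $2^k$ coordinates still leaves a quantity tending to $0$.
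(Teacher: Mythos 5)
Your proof is correct, and it reaches the conclusion by a genuinely different mechanism than the paper's, so it is worth comparing the two. Both arguments share the same skeleton (write $\log T_{(k)}^{l'}$ as a sum of independent terms, show mean and variance are $o(1)$ as $k\to\infty$ uniformly in $l\le m$, then Chebyshev plus a union bound over $l$), but the source of the first-order cancellation differs. The paper keeps the raw sum $\sum_n w_n\bigl(\log\lambda_{n-l}^{(k)}-\log\lambda_n^{(k)}\bigr)$ and exploits the symmetry $\lambda_n^{(k)}=\lambda_{2^k-n}^{(k)}$ of the tent: after a rearrangement it pairs coordinates, producing terms $Y_{n,k,l}=\bigl(\log\lambda_{n-l}^{(k)}-\log\lambda_n^{(k)}\bigr)\bigl(w_{n+l}-w_{2^k-n}\bigr)$ whose means are $O\bigl(l^2/2^{2k}\bigr)$ because the paired Bernoulli variables are nearly identically distributed, at the cost of a leftover deterministic boundary term $f(k,l)$ of size $O\bigl(l^2/2^k\bigr)$ that must be tracked separately. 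You instead leave the sum ungrouped, coordinate by coordinate, and get the cancellation from the information-theoretic identity $E_k\bigl[Y_j^{(l)}\bigr]=-D\bigl(P_j^{(k)}\,\|\,P_{j-l}^{(k)}\bigr)\le 0$ together with the quadratic upper bound on relative entropy for nearby measures bounded away from $0$ and $1$ (your label ``Pinsker-type'' is a misnomer --- Pinsker's inequality goes the other way --- but the bound you actually use, $D(p\|q)\le\sum_w(p(w)-q(w))^2/q(w)$, is correct and elementary, following from $\log x\le x-1$). The trade-off: the paper's pairing is tied to the specific symmetric shape of $\lambda^{(k)}$, whereas your argument uses only that the factor probabilities vary slowly (Lipschitz in the index) and stay in a compact subinterval of $(0,1)$, so it is more robust --- it would survive any slowly varying bump, symmetric or not --- and it avoids both the rearrangement and the bookkeeping of $f(k,l)$. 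It also yields the extra structural fact that $E_k\bigl[\log T_{(k)}^{l'}\bigr]\le 0$ always, and an explicit quantitative choice $k\gtrsim 3t+3\log_2 m+O(1)$, neither of which the paper's version makes visible.
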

\begin{proof}
It follows from \ref{ext:(II.2)-Radon Nykodym derivatives} and the
structure of $P^{(k)}$that for $l<2^{k-1}$, 
\begin{eqnarray*}
\log\left(T_{(k)}^{l'}(w)\right) & = & \log\left(\prod_{n=0}^{2^{k}+l}\frac{P_{n-l}^{(k)}\left(w_{n}\right)}{P_{n}^{(k)}\left(w_{n}\right)}\right)\\
 & = & \log\left(\prod_{n=0}^{2^{k}+l}\left(\frac{\lambda_{n-l}^{(k)}}{\lambda_{n}^{(k)}}\right)^{w_{n}}\right)\\
 &  & \sum_{n=0}^{2^{k}+l}w_{n}\left(\log\lambda_{n-l}^{(k)}-\log\lambda_{n}^{(k)}\right).
\end{eqnarray*}
Using the fact that for every $n<2^{k-1}$, 
\[
\lambda_{n}^{(k)}=\lambda_{2^{k}-n}^{(k)}
\]
and a rearrangement of the sum one has
\begin{equation}
\log\left(T_{(k)}^{l'}(w)\right)=\sum_{n=0}^{2^{k-1}-l}Y_{n,k,l}+f(k,l)(w),\label{eq: form of logT'}
\end{equation}
where 
\[
Y_{n,k,l}:=\left(\log\lambda_{n-l}^{(k)}-\log\lambda_{n}^{(k)}\right)\left(w_{n+l}-w_{2^{k}-n}\right)
\]
 and 
\[
f(k,l)(w)=\left(\sum_{n=2^{k-1}}^{2^{k-1}+l}+\sum_{n=0}^{l}+\sum_{n=2^{k}}^{2^{k+l}}\right)\left[w_{n}\left(\log\lambda_{n-l}^{(k)}-\log\lambda_{n}^{(k)}\right)\right].
\]
By a trivial bound 
\begin{equation}
\left|f(k,l)(w)\right|\leq3l\max_{n\in\NN}\left(\log\lambda_{n-l}^{(k)}-\log\lambda_{n}^{(k)}\right)\leq\frac{3l^{2}}{2^{k}}.\label{eq: first neglibile part}
\end{equation}
To bound the first term notice that
\[
\mathbb{E}_{P^{(k)}}\left(Y_{n,k,l}\right)\propto\frac{l^{2}}{2^{2k}}\ {\rm and}\ \mathrm{Var}_{P^{(k)}}\left(Y_{n,k,l}\right)\propto\frac{l}{2^{3k}}.
\]
By independence of the $Y_{n,k,l}$'s we have 
\[
\mathrm{Var}\left(\sum_{n=0}^{2^{k-1}-l}Y_{n,k,l}\right)\propto\frac{l^{2}}{2^{2k}}\ll\left(\frac{l^{2}}{2^{k}}\right)\propto\mathbb{E}\left(\sum_{n=0}^{2^{k-1}-l}Y_{n,k,l}\right).
\]
It follows from this equation, Equations \eqref{eq: first neglibile part},
\eqref{eq: form of logT'} and Chebyshev's inequality that if $k$
is large enough relative to $m$ and $t$ then for every $l<m$, 
\[
P^{(k)}\left(T_{(k)}^{l'}(w)\leq e^{2^{-t}}\right)\leq\frac{e^{-t}}{m}.
\]
The Lemma follows from a union bound. 
\end{proof}
Now we are ready to construct the product measure. 

Let $\mathrm{P}=\prod\mathrm{P}_{k}$ where for $k\leq0$, 
\[
\mathrm{P}_{k}(0)=\P_{k}(1)=\frac{1}{2}.
\]
To define $\P_{k}$ for positive $k$ we choose inductively two subsequences
$\left\{ n_{t}\right\} _{t\in\NN},\left\{ m_{t}\right\} _{t=0}^{\infty}$
with 
\[
0<n_{t}<m_{t}<n_{t+1}
\]
and $m_{0}=0$. The factor measures will be fair coins for $j\in\left[n_{t},m_{t}\right]$
and on the other segments we will choose them according to $P^{\left(k_{t}\right)}$. 

Definition of $n_{t}$ given $m_{t-1}$ and $\P|_{\left[m_{t-1},n_{t}\right)}$: 

By Lemma \ref{lem: P^(k) has small derivatives} there exists $k_{t}$
such that 
\[
P^{\left(k_{t}\right)}\left(\inf_{l\leq m_{t}}T_{\left(k_{t}\right)}^{l'}(w)\geq e^{-2^{-t}}\right)\geq1-2^{-t}.
\]
Let $n_{t}=m_{t}+2^{k_{t}}$. Now for $m_{t-1}\leq j\leq n_{t}$ set
\[
\P_{j}=P_{j-m_{t-1}}^{\left(k_{t}\right)}.
\]
Definition of $m_{t}$ given $n_{t}$ and $\P|_{\left[n_{t},m_{t}\right)}$:
Let 
\begin{equation}
m_{t}=n_{t}+2^{n_{t}}.\label{eq: choice of m_t}
\end{equation}

For conclusion 
\[
\P_{j}(1)=1-\P_{j}(0)=\begin{cases}
\frac{1}{2}, & j<0\\
P_{j-m_{t-1}}^{\left(k_{t}\right)}, & m_{t-1}\leq j<n_{t}\\
\frac{1}{2}, & n_{t}\leq j<m_{t}
\end{cases}.
\]
The measure satisfies 
\[
\liminf_{k\to\infty}\P_{k}(1)=\frac{1}{2}
\]
and 
\begin{eqnarray*}
\limsup_{k\to\infty}\P_{k}(1) & = & \lim_{k\to\infty}\P_{m_{t-1}+2^{k_{t}-1}}(1)\\
 & = & \lim_{k\to\infty}P^{\left(k_{t}\right)}{}_{2^{k_{t}-1}}(1)=\frac{3}{4}.
\end{eqnarray*}

\begin{prop}
\label{prop:Weird shift}The shift $\left(X,\BB,\P,T\right)$ is conservative
and ergodic and type ${\rm III}_{1}$. 
\end{prop}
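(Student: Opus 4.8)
The plan is to establish the three properties in the order conservative, ergodic, type ${\rm III}_1$, only the first being substantial. Granting conservativity, ergodicity follows at once from Theorem \ref{ext:(3)-Conservative implies ergodic}. Two routine verifications then finish the classification. First, $\P$ is half stationary and non singular by construction: each bump begins and ends at the fair parameter $\lambda=1$, and the $t$-th bump contributes only $\approx 2^{k_t}\cdot(2^{-k_t})^2=2^{-k_t}$ to the Hellinger sum \eqref{eq:non singular condition}, which is summable; hence $\P$ defines a non singular $K$-shift. Second, writing $\P_n(0)=(1-a_n)/2$, the $t$-th bump alone contributes an amount of order $2^{k_t}$ to $\sum_n a_n^2$, so $\sum_n a_n^2=\infty$ and Theorem \ref{thm: Kakutani} shows there is no absolutely continuous invariant probability. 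Thus, once conservativity and ergodicity are known, Corollary \ref{cor:type III_1} leaves only type ${\rm III}_1$, since type ${\rm II}_1$ would supply an a.c.i.p.

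For conservativity I would verify Hopf's criterion \eqref{eq: Conservative Dissipative}, namely $\sum_{n\ge1}T^{n'}(w)=\infty$ for $\P$-a.e.\ $w$, with $T^{n'}(w)=\prod_{j}\P_{j-n}(w_j)/\P_j(w_j)$ by Theorem \ref{ext:(II.2)-Radon Nykodym derivatives}. The key preliminary reduction is that the conservative part $\mathfrak C=\{w:\sum_n T^{n'}(w)=\infty\}$ is unchanged if one alters finitely many coordinates of $w$: since every $\P_k(x)$ lies in a fixed compact subinterval of $(0,1)$, such an alteration multiplies each $T^{n'}(w)$ by a factor bounded above and below uniformly in $n$, hence does not affect the convergence of $\sum_n T^{n'}$. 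Therefore $\mathfrak C$ is a tail event of the product measure $\P$ and, by Kolmogorov's zero--one law, has measure $0$ or $1$; it suffices to show $\P(\mathfrak C)>0$.

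To produce a positive-measure set on which the sum diverges I would isolate the bumps via Lemma \ref{lem: P^(k) has small derivatives}. Each $k_t$ there is chosen so large relative to the previous scale $m_{t-1}$ that, for every shift $n\le m_{t-1}$, the contribution of the $t$-th bump to $\log T^{n'}$ stays within $2^{-t}$ of $0$ on a set of measure $\ge 1-2^{-t}$. Fixing a stage $t$ and restricting to shifts $n\le m_{t-1}$, all bumps of index $\ge t$ are then simultaneously controlled, and as $\sum_s 2^{-s}<\infty$ the Borel--Cantelli lemma puts $\P$-a.e.\ $w$ in all but finitely many of these sets. On this set the part of the product $T^{n'}(w)$ coming from the later, larger bumps is $\approx 1$, so $T^{n'}(w)$ is governed by the finite product $\prod_{j\le N(n)}\P_{j-n}(w_j)/\P_j(w_j)$ over the coordinates of the bumps $1,\dots,t-1$ — this is exactly the truncation singled out before Lemma \ref{lem: P^(k) has small derivatives}. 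It remains to show that, on a set of positive measure, enough of the $\approx m_{t-1}$ available shifts keep this truncated product bounded below, whence $\sum_n T^{n'}=\infty$ on a positive-measure set and the zero--one law upgrades this to a.e.

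The crux, and the main obstacle, is precisely this lower bound on the truncated product, i.e.\ the control of the earlier bumps $s<t$. For these the shift $n$ exceeds the bump width $2^{k_s}$, so the symmetry $\lambda^{(k_s)}_i=\lambda^{(k_s)}_{2^{k_s}-i}$ responsible for the cancellation in Lemma \ref{lem: P^(k) has small derivatives} no longer applies; a direct computation shows that such a bump contributes to $\log T^{n'}$ a term whose mean is negative of order $2^{k_s}$ and whose fluctuations are of order $2^{k_s/2}$, so that for a typical $w$ and a fixed shift $T^{n'}(w)$ is in fact extremely small. The divergence of $\sum_n T^{n'}$ must therefore be extracted from the comparatively rare shifts at which these earlier contributions nearly cancel. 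What makes this feasible is that both the number $\approx m_{t-1}$ of shifts available at stage $t$ and the separation of the bumps can be forced to grow as fast as one wishes by taking each $k_t$, and hence $m_t=n_t+2^{n_t}$, large enough in Lemma \ref{lem: P^(k) has small derivatives}; the delicate point is to balance this growth against the accumulated bias and fluctuations of the earlier bumps so that a positive-measure set of $w$ still meets enough favorable shifts. Carrying out this counting is the technical heart of the proof and is exactly the point flagged before Lemma \ref{lem: P^(k) has small derivatives}.
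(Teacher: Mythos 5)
Your reductions agree with the paper's: ergodicity from conservativity via Theorem \ref{thm: Kakutani}(3), type ${\rm III}_{1}$ from $\sum_{n}a_{n}^{2}=\infty$ and Corollary \ref{cor:type III_1}, the zero--one law (Grewe's dichotomy, Lemma \ref{lem:[Grewe]}) reducing conservativity to $\P\left(\mathfrak{C}\right)>0$, and Lemma \ref{lem: P^(k) has small derivatives} to control the bumps of index $\geq t$. But at what you correctly call the crux --- the contribution of the bumps $s<t$ for shifts $n\in\left[n_{t},m_{t}\right)$ --- your proposal stops, deferring to an unexecuted ``counting of rare favorable shifts,'' and the picture behind that plan is wrong. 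For $n\geq n_{t}$ every coordinate $u$ of an earlier bump satisfies $u-n<0$, so its factor in $T^{n'}(w)$ is $\P_{u-n}\left(w_{u}\right)/\P_{u}\left(w_{u}\right)=1/\left(2\P_{u}\left(w_{u}\right)\right)$: the earlier-bump contribution is the \emph{same} for all shifts $n$ in the flat window $\left[n_{t},m_{t}\right)$, so there are no ``rare shifts where the contributions nearly cancel'' to hunt for --- this contribution is a single random variable, constant in $n$. That is exactly the content of Lemma \ref{lem: Derivatives in m_t-1,n_t}.

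What you are missing is that this contribution admits a \emph{deterministic} lower bound which suffices: the bump probabilities never exceed $3/4$, so each factor obeys $1/\left(2\P_{u}\left(w_{u}\right)\right)\geq2/3>1/2$, and the earlier bumps occupy at most $n_{t}$ coordinates, whence their total contribution is at least $\left(2/3\right)^{n_{t}}$ for \emph{every} $w$ --- no cancellation, no typicality, no counting. You rightly observe that the typical value is far smaller (of order $e^{-c2^{k_{s}}}$ per bump), but this is irrelevant: the choice \eqref{eq: choice of m_t}, $m_{t}=n_{t}+2^{n_{t}}$, puts $2^{n_{t}}$ shifts in the flat window, and $2^{n_{t}}\cdot\left(2/3\right)^{n_{t}}=\left(4/3\right)^{n_{t}}\to\infty$. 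Hence on the positive-measure set where all the later bumps contribute at least $\prod_{l\geq t}e^{-2^{-l}}\geq e^{-1}$ (your Borel--Cantelli step, or the paper's intersection $A=\cap_{t}A_{t}$ of independent sets), one gets
\[
\sum_{n=n_{t}}^{m_{t}}T^{n'}(w)\geq e^{-1}\left(\frac{2}{3}\right)^{n_{t}}\left(m_{t}-n_{t}\right)=e^{-1}\left(\frac{4}{3}\right)^{n_{t}}\xrightarrow[t\to\infty]{}\infty,
\]
and conservativity follows. The whole purpose of the definition $m_{t}=n_{t}+2^{n_{t}}$ is to beat the worst-case factor $\left(3/2\right)^{n_{t}}$; your proposal notices that this parameter can be made large but never plays it against the deterministic bound, which is the step that actually closes the proof.
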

Sketch of proof: The first step will be to show that if $m<m_{t}$
then 
\[
T^{m'}(w)\geq\left(\frac{3}{2}\right)^{-n_{t}}\prod_{u=t}^{\infty}T_{\left(k_{t}\right)}^{m'}(w(t))
\]
where $\left\{ w(t)\right\} _{t=1}^{\infty}\subset X$ are random
sequences which are independent of one another and for each $t$,
$w(t)$ is distributed as $P^{\left(k_{t}\right)}$. 

Then we will use Lemma \ref{lem: P^(k) has small derivatives} to
bound $T^{m'}$ for $m\in\left[n_{t},m_{t}\right)$ on a set of positive
measure. This will give us that $\mathfrak{C}\neq\emptyset$ which
by a result of Grewe, see Lemma \ref{lem:[Grewe]}, yields $X=\mathfrak{C}$. 
\begin{lem}
\label{lem: Derivatives in m_t-1,n_t}For every $n_{t}\leq n<m_{t}$,
\[
\frac{d\P\circ T^{n}}{d\P}=T^{n'}(w)=\left(\prod_{k=1}^{t}\prod_{u=m_{k-1}}^{n_{k}-1}\frac{1}{2\P_{u}\left(w_{u}\right)}\right)\cdot\left(\prod_{l=t+1}^{\infty}\prod_{u=m_{l-1}}^{n_{l}+n-1}\frac{\P_{u-n}\left(w_{u}\right)}{\P\left(w_{u}\right)}\right).
\]
\end{lem}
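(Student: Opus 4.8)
The starting point is the explicit formula for the cocycle in Theorem~\ref{ext:(II.2)-Radon Nykodym derivatives}, which here reads
\[
T^{n'}(w)=\prod_{u=1}^{\infty}\frac{\P_{u-n}(w_u)}{\P_u(w_u)},
\]
with the convention $\P_j=(\tfrac12,\tfrac12)$ for $j\le0$. The whole proof is a reorganisation of this infinite product, and the only structural fact I would need is that $\P_j$ is the fair coin for every $j$ outside the active blocks $[m_{s-1},n_s)$, and even at their endpoints, since $\lambda^{(k)}_0=\lambda^{(k)}_{2^k}=1$. Hence the factor indexed by $u$ equals $1$ unless $u$ or $u-n$ lies in the interior of some active block, so only finitely interacting blocks matter. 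I would carry the argument using the growth relations $n_s=m_{s-1}+2^{k_s}$ and $m_s=n_s+2^{n_s}$, together with the freedom to take the $k_s$ as large as desired, so that $n<m_t\le m_{l-1}\le 2^{k_l}=n_l-m_{l-1}$ for every $l\ge t+1$.

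First I would treat the positions $u<n_t$. Since $n\ge n_t$ we have $u-n<0$ there, so $\P_{u-n}$ is the fair coin and the factor reduces to $\tfrac{1}{2\P_u(w_u)}$; it is nontrivial precisely when $u$ lies in one of the active blocks $[m_{k-1},n_k)$ with $1\le k\le t$, all of which are contained in $[1,n_t)$. Collecting these contributions gives exactly the first product $\prod_{k=1}^{t}\prod_{u=m_{k-1}}^{n_k-1}\tfrac{1}{2\P_u(w_u)}$, the fair endpoints contributing the harmless factor $1$.

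For the positions $u\ge n_t$ I would group the remaining factors around the later active blocks. For each $l\ge t+1$ a nontrivial factor can occur either when $u\in[m_{l-1},n_l)$, so that $\P_u$ is active, or when $u-n\in[m_{l-1},n_l)$, i.e. $u\in[m_{l-1}+n,n_l+n)$, so that $\P_{u-n}$ is active. Because $n\le 2^{k_l}=n_l-m_{l-1}$, the second interval begins at $n_l-n\ge m_{l-1}$, so the two intervals abut and merge into the single window $[m_{l-1},n_l+n)$, on which the factor is $\tfrac{\P_{u-n}(w_u)}{\P_u(w_u)}$; this is the $l$-th factor of the second product. The enormous spacing $m_l=n_l+2^{n_l}$ between an active block and the next is what should keep these windows pairwise disjoint.

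The step I expect to be the crux is the bookkeeping that makes this decomposition exact: one must check that the windows $[m_{l-1},n_l+n)$ for $l\ge t+1$, together with the active blocks $[m_{k-1},n_k)$ for $k\le t$, account for every nontrivial factor of $\prod_u\tfrac{\P_{u-n}(w_u)}{\P_u(w_u)}$ exactly once. The delicate point is that the shift $n$ is as large as the entire current fair block $[n_t,m_t)$, so it can in principle carry a position across several earlier blocks; one therefore has to use the rapid growth of $\{m_s\}$ and $\{n_s\}$ quantitatively to determine, for each $u$, precisely which active block (if any) is met by $u$ and which by $u-n$, and to rule out any leakage at the boundary $u=n_t$ or between consecutive windows. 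Once the windows are shown to tile the set of nontrivial factors, multiplying the two collected products reproduces the formula of Theorem~\ref{ext:(II.2)-Radon Nykodym derivatives} and the lemma follows.
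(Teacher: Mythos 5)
Your set-up and your treatment of the positions $u<n_{t}$ are correct, and your route is in substance the same as the paper's (whose proof is a two-line sketch of exactly this bookkeeping). But the step you yourself defer as ``the crux'' --- that the blocks $[m_{k-1},n_{k})$, $k\leq t$, together with the windows $[m_{l-1},n_{l}+n)$, $l\geq t+1$, account for every nontrivial factor --- is not just unproven in your proposal: it is false, so the proposal cannot be completed as written. Concretely, take any $n$ with $n_{t}\leq n\leq m_{t}-2$ and put $u=n+j$ with $0<j<\min\left(n_{1},m_{t}-n\right)$. Then $u$ lies in the fair block $[n_{t},m_{t})$, hence in none of the blocks of the first product (these end at $n_{t}-1$) and in none of the windows of the second product (these begin at $m_{t}$). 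Yet $u-n=j$ lies in the interior of the first active block $[m_{0},n_{1})$, where $\P_{j}=P_{j}^{(k_{1})}$ is biased, so the factor of $T^{n'}(w)$ at position $u$ equals $2P_{j}^{(k_{1})}\left(w_{u}\right)\neq1$. Since flipping the single coordinate $w_{u}$ changes the left-hand side by the ratio $P_{j}^{(k_{1})}(1)/P_{j}^{(k_{1})}(0)\neq1$ while the right-hand side of the asserted identity does not involve $w_{u}$ at all, the two sides cannot agree almost everywhere. Exactly the ``leakage'' you worried about does occur, for every active block $[m_{k-1},n_{k})$ with $k\leq t$ and $m_{k-1}<m_{t}-n$, because the shift $n\geq n_{t}$ dwarfs all of these blocks combined.

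The gap is therefore genuine, and it is in fact a defect of the lemma as stated, which the paper's own proof also overlooks: its non-overlap remark ($m_{l-1}-n>n_{l-1}$) concerns only the segments with index $l>t$ and says nothing about the current fair block $[n_{t},m_{t})$ being dragged back onto the blocks with index $k\leq t$. The correct identity carries an extra middle factor,
\[
T^{n'}(w)=\left(\prod_{k=1}^{t}\prod_{u=m_{k-1}}^{n_{k}-1}\frac{1}{2\P_{u}\left(w_{u}\right)}\right)\cdot\left(\prod_{u\in S_{n}}2\P_{u-n}\left(w_{u}\right)\right)\cdot\left(\prod_{l=t+1}^{\infty}\prod_{u=m_{l-1}}^{n_{l}+n-1}\frac{\P_{u-n}\left(w_{u}\right)}{\P_{u}\left(w_{u}\right)}\right),
\]
where $S_{n}=\left\{ u<m_{t}:\ u-n\in\bigcup_{k=1}^{t}\left[m_{k-1},n_{k}\right)\right\} $. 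This correction is harmless for the only use of the lemma, in Proposition \ref{prop:Weird shift}: one has $\left|S_{n}\right|\leq n_{t}$ and each new factor is at least $2\min_{j}P_{j}^{(k)}(0)\geq\frac{4}{5}$, so the lower bound $e^{-1}\left(\frac{2}{3}\right)^{n_{t}}$ only degrades to $e^{-1}\left(\frac{8}{15}\right)^{n_{t}}$, and since $m_{t}-n_{t-1}\geq2^{n_{t}}$ the series $\sum_{n}T^{n'}(w)$ still diverges on the set $A$. But as an identity, both the lemma and your proposed tiling require this extra factor; your plan of showing that the stated windows ``tile the set of nontrivial factors'' cannot succeed.
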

\begin{proof}
This is a combination of the Theorem \ref{thm: Kakutani}.2 and the
fact that for every $k\notin\cup_{k=1}^{\infty}\left[m_{t-1},n_{t}\right)$,
\[
\P_{k}\left(w_{k}\right)\equiv\frac{1}{2}\ \forall w_{k}\in\{0,1\}.
\]
 Note that we also used the fact that for every $l>t$, and $n<m_{t}$
\[
m_{l-1}-n>m_{l-1}-n_{l-1}>n_{l-1}
\]
 so the segments $\left[m_{l-1},n_{l-1}\right)$ do not overlap when
we shift by $n$. 
\end{proof}
\begin{proof}[Proof of Proposition \ref{prop:Weird shift}]

Set 
\[
A_{t}=\left\{ w\in X:\ \forall k\leq m_{t-1}.\ \prod_{u=m_{t}}^{n_{t+1}+k}\frac{\P_{u-k}\left(w_{u}\right)}{\P_{u}\left(w_{u}\right)}\geq e^{-2^{-t}}\right\} .
\]
We have that $A_{1},A_{2},..$ are independent and since 
\[
\prod_{u=m_{t}}^{n_{t}+n}\frac{\P_{u-k}\left(w_{u}\right)}{\P_{u}\left(w_{u}\right)}=\prod_{u=m_{t}}^{n_{t+1}+n}\frac{P_{u-m_{t}-n}^{\left(k_{t}\right)}\left(w_{u}\right)}{P_{u-m_{t}}^{\left(k_{t}\right)}\left(w_{u}\right)}=T_{\left(k_{t+1}\right)}^{n'}\left(w|_{\left[m_{t},n_{t+1}+n\right)}\right)
\]
and $\P|_{\left[m_{t},n_{t+1}+n\right)}=P^{\left(k_{t}\right)}|_{\left[0,2^{k_{t}}+n\right]}$
we have by Lemma \ref{lem: P^(k) has small derivatives} and the choice
of $k_{t}$ that 
\[
\P\left(A_{t}\right)=P^{\left(k_{t+1}\right)}\left(\inf_{k\leq m_{t}}T_{\left(k_{t+1}\right)}^{n'}\geq e^{-2^{-t}}\right)\geq1-e^{-t}.
\]
Set $A=\cap_{t}A_{t}$. Then 
\[
\P\left(A\right)\geq\prod_{t=1}^{\infty}\left(1-e^{-t}\right)>0.
\]
For every $m_{t-1}\leq n\leq m_{t}$, $l>t$ and $w\in A$ we have
\[
\prod_{u=m_{l}}^{n_{l+1}+n}\frac{\P_{u-k}\left(w_{u}\right)}{\P_{u}\left(w_{u}\right)}\geq e^{-2^{-l}}.
\]
Applying the last inequality together with Lemma \ref{lem: Derivatives in m_t-1,n_t}
we see that for $w\in A$ and $n_{t-1}\leq n\leq m_{t}$,
\begin{eqnarray*}
T^{n'}(w) & \geq & \left(\prod_{k=1}^{t-1}\prod_{u=m_{k-1}}^{n_{k}-1}\frac{1}{2\P_{u}\left(w_{u}\right)}\right)\cdot\prod_{j=l}^{\infty}e^{-2^{-l}}\\
 & \geq & e^{-1}\prod_{k=1}^{n_{t}}\frac{1}{2\cdot\frac{3}{2}}=\frac{1}{e}\cdot\left(\frac{2}{3}\right)^{n_{t}}.
\end{eqnarray*}
Therefore for every $w\in A$,
\begin{eqnarray*}
\sum_{n=1}^{\infty}T^{n'}(w) & \geq & \sum_{t=1}^{\infty}\sum_{u=n_{t-1}}^{m_{t}}T^{n'}(w)\\
 & \geq & e^{-1}\sum_{t=1}^{\infty}\left[\left(\frac{2}{3}\right)^{n_{t-1}}\left(m_{t}-n_{t-1}\right)\right]=\infty.
\end{eqnarray*}
Here the last assertion follows from \eqref{eq: choice of m_t}. Thus
$A\subset\mathfrak{C}$. By Lemma \eqref{lem:[Grewe]} the shift is
conservative. 

\end{proof}

\section{Apendix}

Here we give a proof of a result from {[}Grewe{]}. 
\begin{lem}
\label{lem:[Grewe]}{[}Grewe{]} Let $P$ be a product measure on $X$.
Then if the factor measures are bounded away from $0$ and $1$ (e.g.
$\exists p>0$ s.t. $\forall k\in\mathbb{Z}$, $p<P_{k}(0)<1-p$)
then the shift $\left(X,P,T\right)$ is either conservative or dissipative.\end{lem}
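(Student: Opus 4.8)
The plan is to show that the conservative part $\mathfrak{C}$ is a \emph{tail event} of the product measure $P$ and then to invoke Kolmogorov's $0$--$1$ law. Recall from \eqref{eq: Conservative Dissipative} together with the Radon--Nykodym formula of Theorem~\ref{thm: Kakutani}.\ref{ext:(II.2)-Radon Nykodym derivatives} (in its version for a general two-sided product measure, where the product runs over all $k\in\ZZ$) that
\[
\mathfrak{C}=\Bigl\{x\in X:\ \sum_{n=1}^{\infty}T^{n'}(x)=\infty\Bigr\},\qquad T^{n'}(x)=\prod_{k\in\ZZ}\frac{P_{k-n}(x_{k})}{P_{k}(x_{k})}.
\]
Since $\{\mathfrak{C},\mathfrak{D}\}$ partitions $X$, it suffices to prove that $P(\mathfrak{C})\in\{0,1\}$.

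The key step, which I expect to be the crux of the argument, is that flipping a single coordinate perturbs $T^{n'}$ by a factor that is bounded \emph{uniformly in $n$}. Fix $j\in\ZZ$ and let $x'$ agree with $x$ except at the $j$-th coordinate. In the product defining $T^{n'}$ only the factor indexed by $k=j$ changes, so
\[
\frac{T^{n'}(x')}{T^{n'}(x)}=\frac{P_{j-n}(x_{j}')\,P_{j}(x_{j})}{P_{j-n}(x_{j})\,P_{j}(x_{j}')}.
\]
Because every factor measure satisfies $p<P_{k}(\cdot)<1-p$, this ratio lies in $\bigl[(p/(1-p))^{2},((1-p)/p)^{2}\bigr]$, a bound independent of $n$ and $j$. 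Consequently the terms of $\sum_{n}T^{n'}(x)$ and $\sum_{n}T^{n'}(x')$ differ by at most this fixed multiplicative constant, so one series diverges if and only if the other does, i.e. $x\in\mathfrak{C}\iff x'\in\mathfrak{C}$. Iterating, $\mathfrak{C}$ is invariant under changing any \emph{finite} set of coordinates. This is precisely where the hypothesis that the factor measures are bounded away from $0$ and $1$ enters; without it the perturbation bound would not be uniform in $n$ and the argument would break down.

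It remains to pass from this invariance to triviality. Invariance under flipping the coordinates in a finite set $F$ means that $\mathbf 1_{\mathfrak{C}}$ is $\sigma(x_{k}:k\notin F)$-measurable; letting $F$ range over all finite subsets of $\ZZ$ shows $\mathfrak{C}$ lies in the tail $\sigma$-algebra $\mathcal{G}:=\bigcap_{F}\sigma(x_{k}:k\notin F)$. Since the coordinates $\{x_{k}\}_{k\in\ZZ}$ are $P$-independent, $\mathcal{G}$ is $P$-trivial by Kolmogorov's $0$--$1$ law: for $A\in\mathcal{G}$ and every finite $F$, $A$ is independent of $\sigma(x_{k}:k\in F)$, hence independent of $\bigvee_{F}\sigma(x_{k}:k\in F)=\BB$, so $A$ is independent of itself and $P(A)=P(A)^{2}\in\{0,1\}$. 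Applying this with $A=\mathfrak{C}$ gives either $P(\mathfrak{C})=1$, in which case the shift is conservative, or $P(\mathfrak{C})=0$, i.e. $P(\mathfrak{D})=1$ and the shift is dissipative.
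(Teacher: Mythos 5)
Your proof is correct and is essentially the paper's own argument: both use the hypothesis $p<P_{k}(\cdot)<1-p$ to get a uniform (in $n$) multiplicative bound on $T^{n'}$ under changes of finitely many coordinates, deduce that $\mathfrak{C}$ and $\mathfrak{D}$ lie in the tail $\sigma$-algebra, and conclude with Kolmogorov's $0$--$1$ law. The only differences are presentational: you flip one coordinate at a time and spell out the $0$--$1$ law, while the paper bounds all finitely many changed factors at once by a single constant $M$.
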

\begin{proof}
The condition on the factor measures ensures that for every $k\in\mathbb{Z}$,
$w_{1},x_{1}\in\{0,1\}$ 
\[
c:=\min\left(\frac{p}{1-p},\frac{1-p}{p}\right)\leq\frac{P_{k}\left(x_{1}\right)}{P_{k}\left(w_{1}\right)}\leq c^{-1}.
\]
This means that if $x,w\in\{0,1\}^{\mathbb{Z}}$ defer in only finitely
many coordinates then there exists $M>0$ s.t 
\[
\frac{1}{M}T^{n'}(x)\leq T^{n'}(w)=\prod_{k=1}^{\infty}\frac{P_{k-n}\left(w_{k}\right)}{P_{k}\left(w_{k}\right)}\leq MT^{n'}(x).
\]
Therefore 
\[
\sum_{n=1}^{\infty}T^{n'}(w)=\infty\Leftrightarrow\sum_{n=1}^{\infty}T^{n'}(x)=\infty
\]
and so the conservative and the dissipative parts are in 
\[
\cap\mathcal{F}_{n}
\]
where $\mathcal{F}_{n}$ is the sub sigma algebra generated by $\left\{ w_{k}:|k|\geq n\right\} $.
By the Zero One Law $\mathfrak{C}=X$ or $\mathfrak{D}=X$. \end{proof}

\end{document}